\theoremstyle{plain}
\newtheorem{Lemma}{Lemma}
\newtheorem{Theorem}[Lemma]{Theorem}
\newtheorem{Proposition}[Lemma]{Proposition}
\newcommand*{\rom}[1]{\expandafter\@slowromancap\romannumeral #1@}
\title{Small solutions of ternary quadratic congruences with averaging over the moduli}
\subjclass[2010]{11D79,11E04,11E25,11L40,11M06,11T24.}
\keywords{quadratic congruences, small solutions, short character sums, moments of $L$-functions}
\author{Stephan Baier}
\address{Stephan Baier,
Ramakrishna Mission Vivekananda Educational and Research Institute, Department of Mathematics, G. T. Road, PO Belur Math, Howrah, West Bengal 711202, India}
\email{stephanbaier2017@gmail.com}
\author{Aishik Chattopadhyay}
\address{Aishik Chattopadhyay,
Ramakrishna Mission Vivekananda Educational and Research Institute, Department of Mathematics, G. T. Road, PO Belur Math, Howrah, West Bengal 711202, India}
\email{aishik.ch@gmail.com}
\begin{document}
\maketitle
\begin{abstract} 
In a recent paper, we proved that for any large enough odd modulus $q\in \mathbb{N}$ and fixed $\alpha_2\in \mathbb{N}$ coprime to $q$, the congruence
\[
x_1^2+\alpha_2x_2^2+\alpha_3x_3^2\equiv 0 \bmod{q}
\]
has a solution of $(x_1,x_2,x_3)\in \mathbb{Z}^3$ with $x_3$ coprime to $q$ of heighth $\max\{|x_1|,|x_2|,|x_3|\}\le q^{11/24+\varepsilon}$ for, in a sense, almost all $\alpha_3$, where $\alpha_3$ runs over the reduced residue classes modulo $q$. Here it was of significance that $11/24<1/2$, so we broke a natural barrier. In this paper, we average the moduli $q$ in addition, establishing the existence of a solution of heighth $\le Q^{3/8+\varepsilon}\alpha_2^{\varepsilon}$ for almost all pairs $(q,\alpha_3)$, with $Q$ large enough, $Q<q\le 2Q$, $q$ coprime to $2\alpha_2$ and $\alpha_3$ running over the reduced residue classes modulo $q$.  
\end{abstract}

\tableofcontents

\section{Introduction and main results}
In this paper, we continue our study of small solutions of diagonal ternary quadratic congruences 
\begin{equation} \label{ourcons}
\alpha_1x_1^2+\alpha_2x_2^2+\alpha_3x^3\equiv 0 \bmod{q},
\end{equation}
begun in \cite{BaCh} and \cite{BaCh1}. 

The study of quadratic congruences of the form $Q(x_1,...,x_n)\equiv 0\bmod{q}$, with $Q$ an integral quadratic form, has attracted a lot of attention. In particular, if $q$ is odd and squarefree, Heath-Brown \cite[Theorem 2]{HB} proved that for any integral ternary quadratic form $Q(x_1,x_2,x_3)$ with determinant coprime to $q$, there exists a solution $(x_1,x_2,x_3)\in \mathbb{Z}^3\setminus\{(0,0,0)\}$ of height $\max\{x_1,x_2,x_3\}\ll q^{5/8+\varepsilon}$ to the congruence
\begin{equation} \label{congruence}
Q(x_1,x_2,x_3)\equiv 0\bmod{q}
\end{equation}
(see \cite{snu} for a corresponding result regarding the case of prime power moduli).  It may be expected that the exponent $5/8$ in Heath-Brown's result can be replaced by $1/2$. In general it is not possible to lower this exponent further as the example of the congruence $x_1^2+x_2^2+x_3^2\equiv 0 \bmod{q}$ shows. However, we proved in  \cite{BaCh1} that, in a sense, almost all congruences of the form in \eqref{ourcons} modulo a fixed $q$ have a solution whose height is much smaller than $q^{1/2}$.  Precisely, we established the following result (see \cite[first part of Theorem 1 in a slightly modified form]{BaCh1}).  

\begin{Theorem} \label{mainthmprevious1}
Let $\varepsilon>0$, $q\in \mathbb{N}$ be odd and $\alpha_2\in \mathbb{Z}$ such that {\rm gcd}$(\alpha_2,q)=1$. Then for all 
$$\alpha_3\in \mathcal{A}(q):=\{t\in \mathbb{Z}: 1\le t\le q, \ \mbox{\rm gcd}(t,q)=1\}
$$ 
with at most $o(\varphi(q))$ exceptions, the congruence
\begin{equation} \label{congr}
x_1^2+\alpha_2x_2^2+\alpha_3x_3^2\equiv 0 \bmod{q}
\end{equation}
has a solution $(x_1,x_2,x_3)\in \mathbb{Z}^3$ satisfying {\rm gcd}$(x_3,q)=1$ and $\max\{|x_1|,|x_2|,|x_3|\}\le q^{11/24+\varepsilon}=q^{0.458333\cdots+\varepsilon}$, provided that $q$ is large enough. 
\end{Theorem}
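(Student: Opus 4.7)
The plan is a second-moment argument. With $H := q^{11/24+\varepsilon}$ and $W$ a smooth non-negative bump function on $\mathbb{R}$ supported in $[-1,1]$ and $\geq 1$ on $[-1/2,1/2]$, I consider the smooth count
\[
N(\alpha_3):=\sum_{\substack{\gcd(x_3,q)=1 \\ x_1^2+\alpha_2x_2^2+\alpha_3x_3^2\equiv 0\bmod q}}W(x_1/H)\,W(x_2/H)\,W(x_3/H).
\]
Using additive characters mod $q$, one has $N(\alpha_3)=q^{-1}\sum_{a\bmod q}T_1(a)\,T_2(a\alpha_2)\,T_3^{\ast}(a\alpha_3)$ with $T_i(b):=\sum_{x}W(x/H)e(bx^2/q)$ (and $T_3^{\ast}$ carrying the coprimality condition). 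The $a\equiv 0$ contribution produces an $\alpha_3$-independent main term $M\asymp H^3\varphi(q)/q^2\asymp q^{3/8+O(\varepsilon)}$, a positive power of $q$. It thus suffices, by Chebyshev's inequality, to show that the variance $V:=\sum_{\alpha_3\in\mathcal{A}(q)}|N(\alpha_3)-M|^2$ is $o(\varphi(q)\,M^2)$.

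The key tool is Poisson summation on $x_1$ and $x_2$, which for $\gcd(b,q)=1$ converts each incomplete exponential sum $T_i(b)$ into a Gauss-sum identity: the length-$H$ sum over $x$ becomes a length-$\ll q/H$ sum over dual frequencies $h$, with a quadratic character $\chi_q(b)$ and a quadratic phase $e(-\overline{4b}\,h^2/q)$ appearing. After squaring $|N-M|^2$, summing over $\alpha_3\in\mathcal{A}(q)$ replaces the $x_3$-phase factor by the Ramanujan sum $c_q(ay_3^2-a'z_3^2)$, thereby forcing the diagonal congruence $ay_3^2\equiv a'z_3^2\bmod q$ between the two copies of $x_3$; the residual $c_q\equiv -1$ piece (off the diagonal) contributes a controllable error. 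The four quadratic characters from the two Gauss sums collapse, $\chi_q(a)\overline{\chi_q(a')}\chi_q(a\alpha_2)\overline{\chi_q(a'\alpha_2)}=1$, and a further application of orthogonality in $a\bmod q$ against the dual quadratic phase yields an additional Ramanujan-type identity, forcing a congruence among the Fourier frequencies $h_1,h_2,h_1',h_2'$.

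The main obstacle is to navigate this nested diagonal structure and to extract enough cancellation from the twisted short sums that survive. After peeling off the leading diagonals in $(y_3,z_3)$, in $(a,a')$ and in the $(h_i,h_i')$, one is left with an off-diagonal remainder whose sharpest estimate demands nontrivial cancellation in an incomplete sum of length $H<q^{1/2}$ modulo $q$. Because the trivial length-$H$ bound on such a sum corresponds precisely to the barrier exponent $1/2$, I plan to invoke the Burgess bound with parameter $r=2$, which for a length-$H$ sum of the quadratic character mod $q$ yields a saving of $q^{1/24-o(1)}$ over trivial in exactly the range $H\gtrsim q^{11/24}$; this is the arithmetic origin of the exponent appearing in the statement. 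Combining this Burgess saving with the cancellation produced by the Gauss-sum identities and the two diagonal reductions should yield $V\ll q^{-\delta}\,\varphi(q)\,M^2$ for some $\delta=\delta(\varepsilon)>0$, and Chebyshev then delivers $N(\alpha_3)\geq M/2>0$ outside an exceptional set of size $o(\varphi(q))$. Delicate points to watch will be the careful isolation of the zero Fourier frequencies $h_1=h_2=0$ and of the classes with $\gcd(a,q)>1$, so that these reassemble exactly the main term and are not double-counted in the variance.
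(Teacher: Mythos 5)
Your proposal takes a genuinely different route from the one the paper relies on, but it has a gap at exactly the step where all the difficulty lives. For the record, Theorem \ref{mainthmprevious1} is not reproved here: it is quoted from \cite{BaCh1}, where it is deduced from the asymptotic formula of Theorem \ref{mainthmprevious2} by a variance argument using \emph{multiplicative} characters. There the congruence is detected by $\frac{1}{\varphi(q)}\sum_{\chi}\chi\bigl((x_1^2+\alpha_2x_2^2)\overline{(-\alpha_3x_3^2)}\bigr)$, the double sum over $(x_1,x_2)$ is controlled in mean square by orthogonality plus divisor-type counting (the single-modulus analogue of Lemma \ref{E1}, giving $\ll N^{2+\varepsilon}$ per modulus when $N\le q^{1/2}$), and the single sum $\sum_{x_3}\chi^2(x_3)$ is bounded by Burgess with $r=2$; balancing $N^{2-2/r}q^{(r+1)/(2r^2)}\cdot N^{2}\ll N^6/q$ yields $N\ge q^{(2r^2+r+1)/(4r^2+4r)}=q^{11/24}$. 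So while Burgess with $r=2$ is indeed the source of $11/24$, it enters through the $x_3$-sum against $\chi^2$, not through a quadratic character emerging from Gauss sums.

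The gap in your additive-character plan is the off-diagonal/diagonal analysis after the Ramanujan-sum reduction, which you leave at the level of ``should yield.'' Three concrete problems. First, Poisson summation on $T_i(b)$ is lossy in your range: with $H=q^{11/24}<q^{1/2}$ the dual sum has length $q/H>H$, so the Gauss-sum identity by itself buys nothing and you must exhibit genuine cancellation in the dual quadratic phases. Second, the diagonal $ay_3^2\equiv a'z_3^2\bmod q$ does not merely give a ``controllable error'': for each of the $\asymp H^2$ pairs $(y_3,z_3)$ it produces a complete sum $\sum_{a}T_1(a)T_2(a\alpha_2)\overline{T_1(ac)T_2(ac\alpha_2)}$ whose expected main term $\varphi(q)H^4/q$ makes this diagonal of the \emph{same order} as $\varphi(q)M^2$. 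Hence you cannot upper-bound it; you must prove an asymptotic with a power saving for the four-variable counts $\#\{x_1^2+\alpha_2x_2^2\equiv c(u_1^2+\alpha_2u_2^2)\bmod q\}$ in boxes of side $q^{11/24}$, uniformly in $c$, and arrange exact cancellation against the cross terms $-2M\Re\sum_{\alpha_3}\overline{N(\alpha_3)}+\varphi(q)M^2$. Third, the numerology is unsubstantiated: a straightforward accounting of your scheme (Parseval in $a$, trivial bounds on the $T_i$, $\varphi(q)$ from the Ramanujan diagonal) gives $V\ll \varphi(q)H^3q^{\varepsilon}$ against a target of $o(\varphi(q)H^6/q^2)$, i.e.\ a deficit of $q^2/H^3=q^{5/8}$ at $H=q^{11/24}$, which a Burgess saving of $q^{1/24}$ cannot close; so the claim that this saving is ``the arithmetic origin'' of the exponent is not supported by the argument as written. (You would also need to treat non-squarefree odd $q$, where the Gauss-sum evaluation and the count of square roots of $\overline{a}a'$ modulo $q$ are more delicate.) Until the off-diagonal estimate is actually carried out with explicit exponents, the proposal does not establish the theorem.
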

This was a direct consequence of the following asymptotic result on the number of solutions to \eqref{congr} of bounded heighth (see \cite[second part of Theorem 1]{BaCh1}). 

\begin{Theorem} \label{mainthmprevious2} 
Keep the assumptions and notations in Theorem \ref{mainthmprevious1}. Let $N$ be a positive real number satisfying $q^{11/24+\varepsilon}\le N\le q$. Then for all  $\alpha_3\in \mathcal{A}(q)$ with at most $o(\varphi(q))$ exceptions, the asymptotic formula
\begin{equation} \label{asympform}
\sum\limits_{\substack{|x_1|,|x_2|,|x_3|\le N\\ \text{\rm gcd}(x_3,q)=1\\ 
x_1^2+\alpha_2x_2^2+\alpha_3x_3^2\equiv 0 \bmod{q}}} 1=C_q\cdot \frac{(2N)^3}{q}\cdot \left(1+o(1)\right)
\end{equation}
holds, where
\begin{equation} \label{Cqdef}
C_q:=\prod\limits_{p|q}\left(1-\frac{1}{p}\right)\cdot \prod\limits_{p|q}\left(1-\frac{1}{p}\cdot \left(\frac{-\alpha_2}{p}\right)\right).
\end{equation}
Here $\left(\frac{\cdot}{p}\right)$ denotes the Legendre symbol. 
Moreover, under the Lindel\"of hypothesis for Dirichlet $L$-functions, the exponent $11/24$ above can be replaced by  $1/3$. 
\end{Theorem}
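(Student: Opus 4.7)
The plan is to reduce \eqref{asympform} to a variance estimate for the error term of the counting function and then invoke Chebyshev's inequality. The coprimality assumptions $\gcd(\alpha_3,q)=\gcd(x_3,q)=1$ force $\gcd(x_1^2+\alpha_2 x_2^2,q)=1$ for any triple contributing to the left-hand side of \eqref{asympform}, so Dirichlet characters modulo $q$ detect \eqref{congr} cleanly:
\[
\sum_{\substack{|x_1|,|x_2|,|x_3| \le N \\ \gcd(x_3,q)=1 \\ x_1^2+\alpha_2 x_2^2+\alpha_3 x_3^2 \equiv 0 \bmod q}} 1 \;=\; \frac{1}{\varphi(q)} \sum_{\chi \bmod q} \overline{\chi}(-\alpha_3)\, T(\chi)\, S(\chi),
\]
where $T(\chi) := \sum_{|x_1|,|x_2| \le N} \chi(x_1^2 + \alpha_2 x_2^2)$ and $S(\chi) := \sum_{|x_3| \le N,\, \gcd(x_3,q)=1} \overline{\chi^2}(x_3)$. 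The principal character $\chi_0$ yields, via a standard local density computation at each $p \mid q$, precisely the main term $C_q(2N)^3/q \cdot (1+o(1))$.

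Let $E(\alpha_3)$ denote the contribution of the non-principal characters. Orthogonality of Dirichlet characters in the $\alpha_3$ variable collapses the mean square to the diagonal
\[
\sum_{\alpha_3 \in \mathcal{A}(q)} |E(\alpha_3)|^2 \;=\; \frac{1}{\varphi(q)} \sum_{\chi \neq \chi_0} |T(\chi)|^2 |S(\chi)|^2.
\]
For non-real $\chi$, the character $\overline{\chi^2}$ is non-principal and Burgess with $r=2$ gives $|S(\chi)| \ll N^{1/2} q^{3/16 + \varepsilon}$; the $O(q^\varepsilon)$ real characters have $|S(\chi)| \asymp N \varphi(q)/q$, so any savings there must be extracted from $T(\chi)$ alone.

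The technical heart is the control of $T(\chi)$. I would exploit the Dirichlet series identity, valid away from the primes dividing $2\alpha_2 q$ and, when the class number exceeds one, after combining with the other forms in the genus of $x^2 + \alpha_2 y^2$,
\[
\sum_{n \ge 1} \frac{r(n) \chi(n)}{n^s} \;=\; L(s, \chi)\, L(s, \chi\chi_{-\alpha_2}) \cdot (\text{finite Euler factor}),
\]
where $r(n) := \#\{(x_1,x_2) \in \mathbb{Z}^2 : x_1^2 + \alpha_2 x_2^2 = n\}$ and $\chi_{-\alpha_2}$ is the Kronecker symbol of $-\alpha_2$. Smoothing the box $|x_1|,|x_2| \le N$ via Mellin transforms of suitable cutoffs and applying Perron's formula writes $T(\chi)$, up to admissible errors, as a contour integral of this product on the critical line. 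Averaging by Cauchy--Schwarz and invoking the classical fourth moment bound $\sum_{\chi \bmod q} |L(\tfrac12 + it, \chi)|^4 \ll q^{1+\varepsilon}(1+|t|)^{O(1)}$ then yields the sharp mean-square estimate $\sum_{\chi \neq \chi_0} |T(\chi)|^2 \ll N^2 q^{1+\varepsilon}$.

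Combining these bounds gives $\sum_{\chi \neq \chi_0} |T(\chi)|^2 |S(\chi)|^2 \ll N^3 q^{11/8 + \varepsilon}$, so the variance is $o(\varphi(q)\cdot (C_q(2N)^3/q)^2)$ precisely when $N \gg q^{11/24 + \varepsilon}$; Chebyshev's inequality then delivers \eqref{asympform} for all but $o(\varphi(q))$ values of $\alpha_3$. Under the Lindel\"of hypothesis, both $T(\chi)$ and $S(\chi)$ acquire essentially square-root cancellation, and the analogous calibration drops the admissible exponent to $1/3$. The main obstacle is the bound on $T(\chi)$ for the real characters, where $S(\chi)$ contributes no oscillation at all: the entire saving in that range must come from the $L$-function factorisation and the fourth-moment input, and the balance of this with the Burgess saving in $S(\chi)$ for non-real $\chi$ is exactly what produces the unconditional exponent $11/24$.
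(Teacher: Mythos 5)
Your overall architecture (orthogonality of characters to detect the congruence, main term from the principal character, variance over $\alpha_3$ collapsed to $\frac{1}{\varphi(q)}\sum_{\chi\neq\chi_0}|T(\chi)|^2|S(\chi)|^2$, Chebyshev, Burgess on the $x_3$-sum, mean square on the $x_1,x_2$-sum) is exactly the strategy of \cite{BaCh1}, and your calibration for the characters with $\chi^2\neq\chi_0$ does reproduce the exponent $11/24$. However, there is a genuine gap in your treatment of the real characters, i.e.\ those with $\chi^2=\chi_0$. For these, $|S(\chi)|\asymp N\varphi(q)/q$, so the entire saving must come from a \emph{pointwise} bound on $T(\chi)=\sum_{|x_1|,|x_2|\le N}\left(\frac{x_1^2+\alpha_2x_2^2}{q_1}\right)$ for each individual quadratic character. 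The tool you propose --- the fourth moment $\sum_{\chi\bmod q}|L(1/2+it,\chi)|^4\ll q^{1+\varepsilon}$ --- is an average over all $\varphi(q)$ characters and yields nothing better than $|T(\chi)|\ll Nq^{1/2+\varepsilon}$ for any single one of the $O(q^{\varepsilon})$ real characters; plugging that in gives a variance contribution of size $N^4q^{\varepsilon}$, which is $o(N^6/q)$ only for $N\gg q^{1/2}$, far weaker than $11/24$. What is actually needed (and what \cite[sections 6 and 7]{BaCh1} supply, quoted in section \ref{ini} of this paper as the bound $V_1(q)\ll Nq^{-1-3\varepsilon}$ for $N\ge q^{1/3+45\varepsilon}$) is a direct estimate of the two-dimensional character sum $\sum_{x_1,x_2}\left(\frac{x_1^2+\alpha_2x_2^2}{q_1}\right)$ via completion and evaluation of the associated complete sums. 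This piece is also what pins the conditional exponent at $1/3$.

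Two further remarks. First, your Dirichlet-series factorization $\sum_n r(n)\chi(n)n^{-s}=L(s,\chi)L(s,\chi\chi_{-\alpha_2})\cdot(\cdots)$ is only valid for class number one; for general $\alpha_2$ the form $x^2+\alpha_2y^2$ is one class among several and its representation numbers are not multiplicative, and the "combine over the genus" fix does not isolate $T(\chi)$ itself. Second, this machinery is unnecessary for the non-real characters: plain orthogonality in the $(x_1,x_2,u_1,u_2)$-variables together with Proposition \ref{repr} already gives $\sum_{\chi\bmod q}|T(\chi)|^2\ll(N^4+qN^2)(\alpha_2N)^{\varepsilon}$ (this is Lemma \ref{E1} here, citing \cite[equations (19) and (20)]{BaCh1}), which is all your calibration uses. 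So the non-real part of your argument is sound once stripped of the $L$-function detour, but the real-character part requires the missing two-dimensional character sum estimate.
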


We note that the said exponent $1/3$ is the limit of what can be achieved since if $N$ is much smaller than $q^{1/3}$, then the main term on the right-hand side of \eqref{asympform} becomes much smaller than 1. 

The goal of this paper is to make unconditional progress on the exponent $11/24$ by varying the moduli $q$ over an interval of the form $(Q,2Q]$ in addition.  We establish the following existence result on small solutions to the congruence in \eqref{congr} for almost all pairs $(q,\alpha_3)$, with $q$ running over a dyadic interval and $\alpha_3$ running over the reduced residue classes modulo $q$. 

\begin{Theorem} \label{mainthm1}
Let $\varepsilon>0$, $Q\ge 1$ and $\alpha_2\in \mathbb{N}$. Define
$$
\mathcal{B}(Q,\alpha_2):=\{(q,t)\in \mathbb{N}^2: Q<q\le 2Q, \ \mbox{\rm gcd}(2\alpha_2,q)=1,\ 1\le t\le q, \ \mbox{\rm gcd}(q,t)=1\}.
$$ 
Then for all $(q,\alpha_3)\in \mathcal{B}(Q,\alpha_2)$
with at most $O\left(Q^{2-\varepsilon}\right)$ exceptions, the congruence in \eqref{congr}
has a solution $(x_1,x_2,x_3)\in \mathbb{Z}^3$ satisfying {\rm gcd}$(x_3,q)=1$ of heighth $\max\{|x_1|,|x_2|,|x_3|\}\le Q^{3/8}(\alpha_2Q)^{\varepsilon}=Q^{0.375}(\alpha_2Q)^{\varepsilon}$, provided that $Q$ is large enough. 
\end{Theorem}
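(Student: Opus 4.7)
The plan is to proceed by the variance method of \cite{BaCh1}, now exploiting the additional average over $q$ to sharpen the height exponent through moment estimates for Dirichlet $L$-functions. Setting $N:=Q^{3/8}(\alpha_2 Q)^\varepsilon$ and letting $N(q,\alpha_3)$ denote the counting function on the left-hand side of \eqref{asympform}, the target is the second-moment bound
\[
V(Q) := \sum_{(q,\alpha_3)\in \mathcal{B}(Q,\alpha_2)} \left| N(q,\alpha_3) - C_q\cdot \frac{(2N)^3}{q}\right|^2 \ll (\alpha_2 Q)^{O(\varepsilon)} \cdot \frac{N^6}{Q^{2+\delta}}
\]
for some fixed $\delta>0$. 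Chebyshev's inequality then caps the number of exceptional pairs by $O(Q^{2-\delta})$, and for any non-exceptional pair the positivity of the main term $\asymp N^3/q$ forces a solution of \eqref{congr} inside the target box.

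\textbf{Reduction to character sums averaged over $q$.} Detecting the congruence by the additive characters $e(\beta)=\exp(2\pi i\beta)$ modulo $q$, the frequency $a\equiv 0$ yields the main term (after accounting for $\gcd(x_3,q)=1$), while the remaining frequencies contribute
\[
E(q,\alpha_3) = \frac{1}{q}\sum_{\substack{a\bmod q\\ a\neq 0}} T(a)\, T(\alpha_2 a)\, T^{\ast}_q(\alpha_3 a),\qquad T(b):=\sum_{|x|\le N}e(bx^2/q),
\]
with $T^{\ast}_q(b)$ the same quadratic exponential sum restricted to $\gcd(x,q)=1$. Squaring $|E(q,\alpha_3)|^2$ and averaging over $\alpha_3\in \mathcal{A}(q)$ via Ramanujan-sum orthogonality collapses the $(a,b)$ double sum to diagonal contributions of the form $a\equiv b\bmod q/d$ for divisors $d\mid q$. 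Poisson summation combined with the standard evaluation of quadratic Gauss sums then converts $T$ and $T^{\ast}_q$ into short sums of the Legendre symbol $\left(\tfrac{\cdot}{q}\right)$ twisted by $\alpha_2$, and Mellin inversion ties those to central values of Dirichlet $L$-functions modulo $q$.

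\textbf{Moment input and main obstacle.} At this stage the fourth moment of $L(1/2+it,\chi)$ averaged over $Q<q\le 2Q$ and characters $\chi\bmod q$, namely the Heath-Brown bound
\[
\sum_{Q<q\le 2Q}\;\sum_{\chi\bmod q}|L(1/2+it,\chi)|^4 \ll Q^{2+\varepsilon}(1+|t|)^{1+\varepsilon},
\]
supplies Lindel\"of on average and a genuine power saving over the single-$q$ argument of \cite{BaCh1}, which is what pushes the exponent from $11/24$ down to $3/8$. The main technical obstacle is the off-diagonal regime generated by proper divisors $d\mid q$: the $\alpha_2$-twist, the coprimality conditions, and the split into square versus non-square classes of $a\bmod q$ all have to be threaded through the moment estimate uniformly in $d$ and in $\alpha_2$, keeping the dependence on $\alpha_2$ down to $\alpha_2^{O(\varepsilon)}$ despite $\alpha_2$ being unbounded. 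A secondary concern is the clean passage from the $a=0$ frequency to the true main term $C_q(2N)^3/q$ with its local densities at primes dividing $q$; this requires a standard completion estimate, but one that is uniform in both $q$ and $\alpha_2$ and that incorporates the coprimality constraint $\gcd(x_3,q)=1$.
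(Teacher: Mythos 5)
Your overall architecture (variance over the pairs $(q,\alpha_3)$, Chebyshev, then hybrid moments of Dirichlet $L$-functions over the family $Q<q\le 2Q$) is the right one, but the quantitative engine you propose does not deliver the exponent $3/8$. The paper's decisive input is the hybrid \emph{eighth} moment $\sum_{q\le Q}\sum_{\chi\bmod q}\int_{-T}^{T}|L(1/2+it,\chi)|^8\,{\rm d}t\ll Q^2T^2\log^{16}(2QT)$ of Chandee--Li--Matom\"aki--Radziwi{\l}{\l}, not the fourth moment. The reason the fourth moment is insufficient is structural: after detecting the congruence with \emph{multiplicative} characters (using $\gcd(x_3,q)=1$ to write it as $-(x_1^2+\alpha_2x_2^2)\overline{x_3}^{\,2}\equiv\alpha_3\bmod q$), the variance factors as $\sum_q\varphi(q)^{-1}\sum_{\chi}|A(\chi)|^2|B(\chi^2)|^2$ with $A$ the sum over $(x_1,x_2)$ and $B$ the short sum over $x_3$. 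The sum $A$ can only be controlled elementarily in its second and fourth moments ($E_1\ll QN^{2+\varepsilon}$, $E_2\ll N^{8+\varepsilon}$ for $Q^{1/4}\le N\le Q^{1/2}$), so H\"older forces the weight $1/4+1/4$ onto $A$ and hence an $L^8$ bound on $B$; one then needs the eighth moment of $L(1/2+it,\chi)$ with Lindel\"of-on-average quality, which is exactly what the extra $q$-average buys (for a single modulus only the fourth moment has that quality, and that is what capped the earlier paper at $11/24$). If you instead pair $A$ and $B$ by Cauchy--Schwarz so that only the fourth moment of $B$ is needed, you get roughly $E_2^{1/2}\cdot(N^2Q^{1+\varepsilon})^{1/2}\approx N^5Q^{1/2+\varepsilon}$, which beats $N^6$ only for $N\ge Q^{1/2}$ --- no improvement at all. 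So your claim that the Heath-Brown fourth moment "pushes the exponent from $11/24$ down to $3/8$" is the central gap; no choice of H\"older exponents rescues it.

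Two further points. First, you run the variance on the sharp-cutoff count at $N=Q^{3/8+\varepsilon}$; the paper cannot and does not do this --- the sharp-cutoff asymptotic (Theorem \ref{mainthm3}) is only obtained for $N\ge Q^{45/104}$, via an extra Burgess estimate for the boundary strip. To reach $3/8$ one must smooth the $x_3$-variable with a weight $\Phi$ whose Mellin transform decays rapidly, so that the critical-line integral is truncated at $T=Q^{\varepsilon}N/\Delta=Q^{\varepsilon}$ when $\Delta\asymp N$; existence of a solution then follows because $\Phi$ is supported in $[-2N,2N]$. Without smoothing, the $T^9$ factor in the $L^8$ bound is fatal. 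Second, your Chebyshev normalization is off by $Q^2$: since the main term is $\asymp N^3/Q$ and there are $\asymp Q^2$ pairs, the correct (and provable) target is $V\ll N^6Q^{-\delta}$, not $N^6Q^{-2-\delta}$; the latter is smaller than the diagonal contribution and is false. Your additive-character/Gauss-sum detection is also a departure from the paper (which needs multiplicative characters precisely to make the $\alpha_3$-average and the $A$/$B$ factorization clean), and the asserted reduction of the incomplete quadratic exponential sums of length $N\ll q^{1/2}$ to $L$-functions mod $q$ is not worked out and is not obviously viable.
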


It is easy to see that  $\sharp \mathcal{B}(Q,\alpha_2)\gg Q^2\cdot \varphi(\alpha_2)/\alpha_2\gg Q^2/\log\log(10\alpha_2)$ and hence, the above Theorem \ref{mainthm1} with a small enough $\varepsilon>0$ gives a non-trivial result if $\alpha_2\le Q^A$, where $A$ is any fixed positive number. 

Theorem \ref{mainthm1} is a direct consequence of Theorem \ref{mainthm2} with $\Delta=N/2$ below, which provides an asymptotic result on a weighthed number of solutions to \eqref{congr} of bounded heighth. 

\begin{Theorem} \label{mainthm2} 
Keep the assumptions and notations in Theorem \ref{mainthm1}. Let $\Delta$ and $N$ be positive real numbers satisfying
\begin{equation} \label{DeltaN}
Q^{1/3}N^{1/9}(\alpha_2 Q)^{4\varepsilon}\le \Delta\le N\le Q^{1/2}.
\end{equation}
Let $\Phi:\mathbb{R}\rightarrow \mathbb{R}$ be a smooth function satisfying
\begin{equation} \label{Phishape}
\Phi(x)=\begin{cases} 1 & \mbox{\rm if } 0\le |x|\le N-\Delta,\\
\mbox{\rm monotonic } & \mbox{\rm if } N-\Delta\le |x|\le N+\Delta,\\
0 & \mbox{\rm if } |x|\ge N+\Delta,
\end{cases}
\end{equation}
and 
\begin{equation} \label{derbounds}
\Phi^{(j)}(x)\ll_j \Delta^{-j} \quad \mbox{\rm for all } j\in \mathbb{N}\  \mbox{\rm and } x\in \mathbb{R}. 
\end{equation}
Then for all  $(q,\alpha_3)\in \mathcal{B}(Q,\alpha_2)$ with at most $O\left(Q^{2-\varepsilon}\right)$ exceptions, the asymptotic formula
\begin{equation} \label{asympform'}
\sum\limits_{\substack{(x_1,x_2,x_3)\in \mathbb{Z}^3\\ |x_1|,|x_2|\le N\\ \text{\rm gcd}(x_3,q)=1\\ 
x_1^2+\alpha_2x_2^2+\alpha_3x_3^2\equiv 0 \bmod{q}}} \Phi(x_3)=C_q\cdot \frac{(2N)^2}{q}\cdot \int\limits_{\mathbb{R}} \Phi(x){\rm d}x\cdot \left(1+O\left(Q^{-\varepsilon}\right)\right)
\end{equation}
holds, where $C_q$ is defined as in \eqref{Cqdef}.
\end{Theorem}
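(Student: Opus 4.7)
\textbf{Proof plan for Theorem \ref{mainthm2}.}
My plan is to bound the variance of the error over the family $\mathcal{B}(Q,\alpha_2)$, i.e.\ to establish
\begin{equation*}
V := \sum_{\substack{Q<q\le 2Q\\ \gcd(q,2\alpha_2)=1}}\ \sum_{\alpha_3\in\mathcal{A}(q)} \bigl|T(q,\alpha_3)-M(q,\alpha_3)\bigr|^2 \ll_\varepsilon Q^{2-3\varepsilon}\cdot |M_{\max}|^2,
\end{equation*}
where $T(q,\alpha_3)$ is the sum on the left-hand side of \eqref{asympform'}, $M(q,\alpha_3)$ denotes the leading main term on the right-hand side, and $M_{\max}\asymp N^{2}\int\Phi/Q$ is its typical size. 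By Chebyshev's inequality this immediately yields the desired $O(Q^{2-\varepsilon})$ bound on the exceptional set.

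The first step is a character decomposition on $\alpha_3$. Using $\gcd(x_3,q)=1$ I rewrite the congruence \eqref{congr} as $\alpha_3\equiv -(x_1^2+\alpha_2 x_2^2)\overline{x_3}^{\,2}\pmod q$ and detect it via orthogonality of Dirichlet characters modulo $q$. The principal character produces $M(q,\alpha_3)$ after isolating pairs $(x_1,x_2)$ with $\gcd(x_1^2+\alpha_2 x_2^2,q)=1$ (the contribution of the remaining pairs is handled by divisor bounds, using $\gcd(q,2\alpha_2)=1$ so that $-\alpha_2$ is a fixed non-zero residue). The non-principal characters give
\begin{equation*}
T(q,\alpha_3)-M(q,\alpha_3) \;\approx\; \frac{1}{\varphi(q)}\sum_{\chi\ne\chi_0 \bmod q}\chi(-\alpha_3)\, S_{12}(\chi)\, S_3(\overline{\chi}),
\end{equation*}
where $S_{12}(\chi):=\sum_{|x_1|,|x_2|\le N}\chi(x_1^2+\alpha_2 x_2^2)$ and $S_3(\psi):=\sum_{x_3}\Phi(x_3)\psi(x_3)^2$. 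Applying Parseval in $\alpha_3$ converts $V$ into
\begin{equation*}
V \;\ll\; \sum_{\substack{Q<q\le 2Q\\ \gcd(q,2\alpha_2)=1}}\frac{1}{\varphi(q)}\sum_{\chi\ne\chi_0}|S_{12}(\chi)|^2\,|S_3(\overline{\chi})|^2.
\end{equation*}

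The heart of the argument is the estimation of this double sum over $(q,\chi)$. For the short sum $S_3(\overline{\chi})$ I would apply Poisson summation, or equivalently the approximate functional equation for $L(s,\overline{\chi}^{2})$, which after twisting by the Gauss sum converts $S_3(\overline{\chi})$ into a dual sum of effective length $\asymp q/\Delta$; the smoothness hypotheses \eqref{Phishape}--\eqref{derbounds} ensure superpolynomial decay beyond this cutoff, and balancing this dual length against the trivial range is precisely where the lower bound $\Delta\ge Q^{1/3}N^{1/9}(\alpha_2Q)^{4\varepsilon}$ enters. For the two-dimensional sum $S_{12}(\chi)$ I would view $x_1^2+\alpha_2 x_2^2$ as a norm form, so that $\chi$ evaluated on this form factors through $\chi$ and its twist by the quadratic character $\bigl(\tfrac{-\alpha_2}{\cdot}\bigr)$; this yields a representation in terms of (shifted) twisted Dirichlet $L$-functions on the critical line. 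Finally, I would combine the Heath--Brown / Bombieri large sieve for Dirichlet characters with the fourth moment of Dirichlet $L$-functions averaged over the modulus $q\in(Q,2Q]$ to collapse the double sum into a power saving in $Q$.

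The main obstacle will be the last step: the improved exponent $3/8$ over the previous $11/24$ must come precisely from the additional square-root gain in the $q$-aspect provided by the large sieve / fourth-moment input, and arranging the Parseval expression so that both the saving from the smoothed short sum $S_3$ (of length $N$ with bandwidth $\Delta$) and the saving from the $(q,\chi)$-average on $S_{12}$ are realized simultaneously, while tracking the coprimality losses, the factor $\alpha_2^\varepsilon$, and the contribution of non-primitive characters, is the delicate point.
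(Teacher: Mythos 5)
Your overall framework (variance over $(q,\alpha_3)$, detection of the congruence by Dirichlet characters mod $q$, orthogonality/Parseval in $\alpha_3$ to reduce to a $(q,\chi)$-sum of $|S_{12}(\chi)|^2|S_3(\overline{\chi})|^2$) matches the paper's sections 3--4. But the heart of your argument --- the actual estimation of that double sum --- is where the proposal has a genuine gap. You propose to combine the large sieve with the \emph{fourth} moment of Dirichlet $L$-functions averaged over $q\in(Q,2Q]$. A back-of-envelope check shows this is not enough: applying Cauchy--Schwarz to split $|S_{12}|^2|S_3|^2$ and feeding in the hybrid fourth moment bound $\sum_{q\le Q}\sum_{\chi}\int_{-T}^{T}|L(1/2+it,\chi)|^4\,dt\ll Q^2T\log^{A}(QT)$ for $S_3$ leads to a constraint of the shape $\Delta\gg N^{1/2}Q^{1/4}$, which with $\Delta\asymp N$ forces $N\gg Q^{1/2}$ and so gives nothing below the trivial exponent $1/2$. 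The paper's key input is instead the hybrid \emph{eighth} moment $\sum_{q\le Q}\sum_{\chi}\int_{-T}^{T}|L(1/2+it,\chi)|^8\,dt\ll Q^2T^2\log^{16}(2QT)$ (Chandee--Li--Matom\"aki--Radziwi\l\l), exploited via an asymmetric H\"older split $\sum_q V_2(q)\le E_1^{1/2}E_2^{1/4}F^{1/4}$, where $E_1,E_2$ are the second and fourth moments of $S_{12}$ (bounded \emph{elementarily} by orthogonality, divisor bounds and Proposition \ref{repr} --- no norm-form $L$-function factorization is needed or used), and $F$ is the eighth moment of the smoothed sum $S_3$, converted by Mellin inversion into the $L$-function moment. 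This specific $(1/2,1/4,1/4)$ balance is exactly what produces the condition $\Delta\ge Q^{1/3}N^{1/9}(\alpha_2Q)^{4\varepsilon}$ and hence the exponent $3/8$; without the eighth moment your plan cannot reach it.

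Two further points. First, your Parseval reduction lumps all non-principal $\chi$ together, but the characters with $\chi^2=\chi_0$ must be treated separately: for them $S_3(\overline{\chi})=\sum\Phi(x_3)\overline{\chi}^2(x_3)$ exhibits no cancellation (it is $\asymp\varphi(q)N/q$), so your approximate-functional-equation/dual-sum analysis of $S_3$ fails there; the paper isolates this contribution as $V_1(q)$ and bounds it by $Nq^{-1-3\varepsilon}$ using the Burgess-type estimates on $S_{12}$ for quadratic characters from \cite{BaCh1}. Second, your claim that $\chi(x_1^2+\alpha_2x_2^2)$ "factors through $\chi$ and its twist by $(\tfrac{-\alpha_2}{\cdot})$" is not correct in general (it would require class number one for $\mathbb{Q}(\sqrt{-\alpha_2})$; in general one would need Hecke characters of the imaginary quadratic field), but this is moot since the paper never needs such a factorization.
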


In this context, it is desirable to consider the corresponding sum with sharp cutoff in $x_3$ on the left-hand side of \eqref{asympform} as well. We derive the following asymptotic result, for which we use Theorem \ref{mainthm2} above again. 

\begin{Theorem} \label{mainthm3} 
Keep the assumptions and notations in Theorem \ref{mainthm1}. Let $N$ be a positive real number satisfying $Q^{45/104}(\alpha_2Q)^{3\varepsilon}=Q^{0.43269...}(\alpha_2Q)^{3\varepsilon}\le N\le Q^{1/2}$. 
Then for all  $(q,\alpha_3)\in \mathcal{B}(Q,\alpha_2)$ with at most $O\left(Q^{2-\varepsilon}\right)$ exceptions, the asymptotic formula
\begin{equation} \label{newasym}
\sum\limits_{\substack{|x_1|,|x_2|,|x_3|\le N\\ \text{\rm gcd}(x_3,q)=1\\ 
x_1^2+\alpha_2x_2^2+\alpha_3x_3^2\equiv 0 \bmod{q}}} 1=C_q\cdot \frac{(2N)^3}{q}\cdot \left(1+O\left(Q^{-\varepsilon}\right)\right)
\end{equation}
holds.
\end{Theorem}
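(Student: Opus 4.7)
The plan is to sandwich the sharp indicator $\mathbf{1}_{[-N,N]}(x_3)$ between two smooth functions that each fit the shape prescribed in Theorem \ref{mainthm2} and to apply that theorem separately to each. I introduce a parameter $\Delta$ with $Q^{1/3}N^{1/9}(\alpha_2Q)^{4\varepsilon}\le\Delta\le N$, to be optimised at the end, and construct a minorant $\Phi^{-}$ in the class \eqref{Phishape}--\eqref{derbounds} with scales $(N,\Delta)$ (so $\Phi^{-}\equiv 1$ on $|x|\le N-\Delta$ and $\Phi^{-}\equiv 0$ on $|x|\ge N+\Delta$) and a majorant $\Phi^{+}$ in the same class but with \emph{shifted} scales $(N+\Delta,\Delta)$ (so $\Phi^{+}\equiv 1$ on $|x|\le N$ and $\Phi^{+}\equiv 0$ on $|x|\ge N+2\Delta$). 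Since $\Phi^{-}(x_3)\le 1$ on $|x_3|\le N$ while $\Phi^{+}(x_3)=1$ on that same range and is nonnegative elsewhere, one obtains the elementary sandwich
\[
\sum_{\substack{|x_i|\le N\\\mathrm{congr.}}}\Phi^{-}(x_3)\ \le\ \sum_{\substack{|x_i|\le N\\\mathrm{congr.}}}1\ \le\ \sum_{\substack{|x_i|\le N+\Delta\\\mathrm{congr.}}}\Phi^{+}(x_3),
\]
where the right-hand inequality uses that enlarging the summation box from $|x_i|\le N$ to $|x_i|\le N+\Delta$ only contributes further nonnegative terms.

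Next, I would apply Theorem \ref{mainthm2} to the outer sums at the scales $(N,\Delta)$ and $(N+\Delta,\Delta)$ respectively. Each application produces an exceptional set of pairs $(q,\alpha_3)$ of size $O(Q^{2-\varepsilon})$, whose union is still of this size, so that outside it both asymptotics are simultaneously valid. Using the elementary identities $\int_{\mathbb{R}}\Phi^{\pm}(x)\,{\rm d}x=2N+O(\Delta)$ and $(2(N\pm\Delta))^{2}=(2N)^{2}(1+O(\Delta/N))$, both outer sums equal
\[
C_q\cdot\frac{(2N)^{3}}{q}\cdot\bigl(1+O(\Delta/N)+O(Q^{-\varepsilon})\bigr),
\]
and sandwiching yields the same asymptotic with total relative error $O(\Delta/N+Q^{-\varepsilon})$ for the sharp-cutoff sum on the left-hand side of \eqref{newasym}.

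Finally, I would choose $\Delta$ minimally as $\Delta=Q^{1/3}N^{1/9}(\alpha_2Q)^{4\varepsilon}$ and verify that the hypothesis $N\ge Q^{45/104}(\alpha_2Q)^{3\varepsilon}$ forces $\Delta/N\ll Q^{-\varepsilon}$, so that the sandwich error collapses to the required $O(Q^{-\varepsilon})$. A direct computation gives $\Delta/N=Q^{-2/39}(\alpha_2Q)^{4\varepsilon/3}$ on the boundary of the hypothesis, which is $\ll Q^{-\varepsilon}$ once $\varepsilon$ is chosen sufficiently small in terms of the implicit $A$ in the range $\alpha_2\le Q^{A}$; the exponent $45/104$ is precisely the threshold making the two constraints $\Delta\ge Q^{1/3}N^{1/9}$ and $\Delta/N\le Q^{-\varepsilon}$ compatible once the $\alpha_2$-dependence is absorbed into the $(\alpha_2Q)^{3\varepsilon}$ factor. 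The main obstacle is technical rather than conceptual: the rigid shape required in Theorem \ref{mainthm2} has $\Phi=1$ only on a strict interior together with a nontrivial transition region straddling $|x|=N$, so a naive substitution at the original scale $N$ does not give a clean one-sided bound for $\mathbf{1}_{[-N,N]}$; the shift of the majorant's scale from $N$ to $N+\Delta$, at the (harmless) cost of enlarging the summation box, is the key device that makes the sandwich rigorous.
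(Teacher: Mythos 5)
Your sandwich argument is sound and takes a genuinely different route from the paper. The paper writes the sharp sum as the smoothed sum of Theorem \ref{mainthm2} plus a boundary correction supported on $N-\Delta\le |x_3|\le N+\Delta$, and controls the second moment of that correction over the pairs $(q,\alpha_3)$ by detecting the congruence with Dirichlet characters and estimating the resulting short sum $\sum_{N-\Delta\le |x_3|\le N+\Delta}\overline{\chi}^2(x_3)$ with Burgess's bound ($r=3$). That step forces $\Delta\le Q^{-11/12-4\varepsilon}N^3$, and reconciling this with the lower bound $\Delta\ge Q^{1/3}N^{1/9}(\alpha_2Q)^{4\varepsilon}$ from \eqref{DeltaN} is precisely what produces the threshold $45/104$ (and $159/368$ for cubefree $q$ via $r=4$). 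Your majorant/minorant device instead exploits the nonnegativity of the counting function, so the boundary contribution is absorbed into the difference of two quantities each evaluated asymptotically by Theorem \ref{mainthm2}; the only constraint it adds is $\Delta/N\ll Q^{-\varepsilon}$, and Burgess is not needed at all. Consequently your last paragraph misidentifies the origin of $45/104$: the constraints $\Delta\ge Q^{1/3}N^{1/9}(\alpha_2Q)^{4\varepsilon}$ and $\Delta\le NQ^{-\varepsilon}$ are compatible already for $N\ge Q^{3/8+O(\varepsilon)}$ (your own computation $\Delta/N\ll Q^{-2/39+o(1)}$ at $N=Q^{45/104+o(1)}$ shows the slack), so your method in fact proves \eqref{newasym} in a wider range than the theorem states, matching the natural range of Theorem \ref{mainthm2}. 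Two small points to tidy up: applying Theorem \ref{mainthm2} at the shifted scale $N^{+}=N+\Delta$ requires $N^{+}\le Q^{1/2}$, which can fail at the very top of the range $N\le Q^{1/2}$ (harmless, but it should be addressed, e.g.\ by noting that the proof of Theorem \ref{mainthm2} only uses $N\ll Q^{1/2}$, or by taking the majorant with plateau $[-N,N]$ and support in $[-N-2\Delta,N+2\Delta]$ while keeping the box at a scale covered by the theorem); and the constant-factor adjustment $Q^{1/3}(N^{+})^{1/9}\le Q^{1/3}(2N)^{1/9}$ in \eqref{DeltaN} for the majorant should be recorded. Neither affects the validity of the argument.
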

$ $\\
{\bf Remark 1:} We get a slightly better exponent of $159/368=0.43206...$ in place of $45/104=0.43269...$ if we restrict the moduli $q$ to cubefree numbers in addition (see Remark 2 in section \ref{final}).\\ 

The novelty in this paper, as compared to \cite{BaCh1}, is to use an eighth moment bound for short character sums to moduli in a dyadic interval.  We will be brief at places where the arguments from \cite{BaCh1} repeat. Throughout the sequel, $\varepsilon$ will be an arbitrarily small positive number which, depending on the context, may change from a line to the next. For $n\in \mathbb{N}$, we denote by $\tau(n)$ the number of divisors of $n$. Recall the well-known bound $\tau(n)\ll_{\varepsilon} n^{\varepsilon}$ for every $\varepsilon>0$. \\ \\
{\bf Acknowledgements.} The authors would like to thank the anonymous referee for valuable comments. They also wish to thank Professor Ram Murty for the suggestion to consider an extra averaging over the moduli. Moreover, they would like to thank the Ramakrishna Mission Vivekananda Educational and Research Institute for an excellent work environment. The research of the second-named author was supported by a CSIR Ph.D fellowship under file number 09/0934(13170)/2022-EMR-I.
\section{Preliminaries}
We will use the following estimate for short character sums due to Burgess.
 
\begin{Proposition}[Burgess] \label{Burgess}
Let $M\in \mathbb{R}$, $N\ge 1$ and $\chi$ be a non-principal Dirichlet character modulo $q$. Then
\begin{equation} \label{claimed}
\sum\limits_{M<n\le M+N} \chi(n) \ll_{\varepsilon,r} N^{1-1/r}q^{(r+1)/(4r^2)+\varepsilon}
\end{equation}
for $r=2,3$, and for any $r\in \mathbb{N}$ if $q$ is cubefree. 
\end{Proposition}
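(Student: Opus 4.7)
The plan is to follow Burgess's original shifted convolution method. Write $S = \sum_{M < n \le M+N}\chi(n)$ and introduce parameters $A, B \ge 1$ to be optimized at the end. For each pair $(a,b)$ with $1 \le a \le A$, $1 \le b \le B$ and $\gcd(a, q) = 1$, the multiplicativity $\chi(n) = \chi(a)\chi(\bar{a} n)$ together with the substitution $n \mapsto n + ab$ shows that $S$ differs from $\sum_n \chi(\bar{a} n + b)$ by an error of size $O(AB)$ coming from the shift of the interval. Averaging over all such $(a, b)$ and moving the $(a,b)$-sum inside the absolute value yields
\[
|S| \le \frac{1}{AB}\sum_{|m|\le 2N}\Bigl|\sum_{a,b} \chi(m + b\bar{a})\Bigr| + O(AB).
\]

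I would then apply H\"older's inequality in $m$ with exponent $2r$, expand the resulting $2r$-th power, and extend the sum over $m$ to a full residue system modulo $q$ at the trivial cost of a factor $N/q$. This reduces the problem to estimating complete character sums of the form $\sum_{m \bmod q} \chi\bigl(g(m)\bigr)$, where $g$ is a rational function of degree at most $2r$ determined by the shift variables. For prime $q$ the Weil bound gives $O(q^{1/2})$ whenever $g$ is not a perfect $d$-th power (with $d$ the order of $\chi$); the remaining diagonal contribution, corresponding to tuples where $g$ degenerates into a $d$-th power, is controlled by a direct combinatorial count of at most $O_r((AB)^r)$ such tuples, each contributing $q$. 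Substituting these two bounds back and optimizing $A, B$ so that the diagonal and off-diagonal contributions balance produces the claimed exponent $N^{1-1/r}q^{(r+1)/(4r^2)+\varepsilon}$.

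The main obstacle, and the source of the dichotomy between the general case and the cubefree case in the statement, is lifting the complete-sum bound from prime moduli to composite $q$. Via the Chinese Remainder Theorem the problem reduces to prime powers; for $p$ and $p^2$ one recovers the full Weil-strength bound by an elementary $p$-adic expansion, which suffices for every $r$, whereas for $p^3$ a crucial cancellation is lost and one cannot do better than what corresponds to $r \le 3$. When $q$ is cubefree the obstruction disappears entirely and the argument above goes through for all $r \in \mathbb{N}$. For the detailed bookkeeping in the composite-modulus case I would simply invoke the published Burgess treatments rather than reproduce it here.
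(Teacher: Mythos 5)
Your proposal sets out to re-derive Burgess's theorem from scratch, whereas the paper does essentially no such thing: it cites the known result for \emph{primitive} characters (\cite{IK}, Theorem 12.6) and supplies only the short reduction from an imprimitive non-principal $\chi$ to the primitive character $\chi_1$ inducing it, via M\"obius inversion over the divisors $d$ of $q$ with $\gcd(n,q)=1$ detected by $\sum_{d\mid q}\mu(d)\chi_1(d)\sum_{m}\chi_1(m)$. Measured against that, your write-up has a genuine gap in two respects. First, the technical core of Burgess's argument --- the multiplicity count for residues $m\equiv \bar a n\bmod q$ (your sum over $|m|\le 2N$ should be a sum over residue classes mod $q$ weighted by a multiplicity $\nu(m)$, whose second moment is what actually has to be controlled), the Weil-bound input, and above all the composite-modulus bookkeeping --- is exactly the part you defer to ``the published Burgess treatments.'' Deferring it is legitimate, but then the proof \emph{is} the citation, and the sketch surrounding it is not doing any work; as written it cannot be checked as a proof.

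Second, and more importantly for this particular proposition: the one point where the statement goes beyond the standard reference is that $\chi$ is only assumed non-principal, not primitive, and your argument never addresses this. Burgess's method as you outline it (and as stated in \cite{IK}) is for primitive characters; for an imprimitive $\chi$ the complete sums $\sum_{m\bmod q}\chi(g(m))$ do not directly satisfy the Weil-type bounds you invoke. The clean way out is precisely the paper's reduction: write $\chi=\chi_1\cdot\chi_{0}$ with $\chi_1$ primitive of conductor $q_1\mid q$, unfold the coprimality condition by M\"obius inversion, and apply the primitive-character bound to each of the $\tau(q)\ll_\varepsilon q^\varepsilon$ resulting subsums. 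Without some such step your argument does not prove the proposition as stated.
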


\begin{proof}
This is \cite[Theorem 12.6]{IK} if $\chi$ is primitive. If $\chi$ is non-primitive and induced by a primitive character $\chi_1$ of conductor $q_1>1$ dividing $q$, then 
$$
\sum\limits_{M<n\le M+N} \chi(n) = \sum\limits_{\substack{M<n\le M+N\\ \text{gcd}(n,q)=1}} \chi_1(n)=\sum\limits_{d|q} \mu(d)\chi_1(d) \sum\limits_{M/d<m\le M/d+N/d} \chi_1(m)
$$ 
using M\"obius inversion. Now the claimed bound \eqref{claimed} follows by applying the already established bound to the above character sum with a primitive character $\chi_1(m)$.  
\end{proof}

For an integrable function $f:(0,\infty)\to \mathbb{C}$, its Mellin transform is defined as
$$
\hat{f}(s):= \int_{0}^{\infty} f(x) x^{s-1} {\rm d}x
$$
for $s\in \mathbb{C}$, whenever this integral converges. We will use the Mellin inversion formula below to link smoothed character sums to Dirichlet $L$-functions.  

\begin{Proposition}[Mellin inversion formula]\label{MIF}
Suppose $f:(0,\infty)\to\mathbb{C}$ is a continuous function of bounded variation and $a<b$ are real numbers such that the integral defining its Mellin transform $\hat{f}(s)$ converges absolutely for every $s$ in the strip $\{z\in \mathbb{C}: a< \mbox{Re}(z)< b\}$. 
Then for every $x>0$ and $c\in (a,b)$, the inversion formula
$$
f(x)=\frac{1}{2\pi i}\int\limits_{(c)} \hat{f}(s) x^{-s}{\rm d}s
$$
holds, where the integral is taken along the vertical line $\mbox{Re}(s)=c$.
\end{Proposition}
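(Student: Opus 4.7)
The plan is to reduce the statement to the classical Fourier inversion formula by the standard change of variables $x=e^{t}$. Define
\[
g(t):= f(e^{t})e^{ct}, \qquad t\in \mathbb{R}.
\]
Substituting $x=e^{t}$, $\mathrm{d}x = e^{t}\,\mathrm{d}t$ in the definition of the Mellin transform shows that for every $\tau \in \mathbb{R}$,
\[
\hat{f}(c+i\tau) = \int_{-\infty}^{\infty} f(e^{t})\,e^{(c+i\tau)t}\,\mathrm{d}t = \int_{-\infty}^{\infty} g(t)\,e^{i\tau t}\,\mathrm{d}t,
\]
so $\tau\mapsto \hat{f}(c+i\tau)$ is, up to a sign in the frequency variable, the Fourier transform of $g$.

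Next I would verify that $g$ falls within the scope of a version of Fourier inversion that applies to continuous functions of bounded variation. The hypothesis that the Mellin integral converges absolutely on the strip $a<\Re s < b$, applied at $s=c$, yields $g\in L^{1}(\mathbb{R})$. The assumption that $f$ is continuous and of bounded variation on $(0,\infty)$ transfers to $g$ on $\mathbb{R}$: composition with the $C^{\infty}$-diffeomorphism $t\mapsto e^{t}$ preserves continuity and bounded variation on every compact subinterval, and the multiplication by $e^{ct}$ contributes only a smooth factor, so $g$ is continuous and locally of bounded variation. Absolute convergence of the Mellin transform at $c+\varepsilon$ and $c-\varepsilon$ (available by the open-strip hypothesis) forces the decay of $g$ at $\pm\infty$ needed to make $g$ globally of bounded variation.

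With these properties in hand, the Fourier inversion theorem (in the Jordan/Dirichlet form applicable to continuous functions of bounded variation in $L^{1}$) gives, for each $t\in\mathbb{R}$,
\[
g(t) = \frac{1}{2\pi}\int_{-\infty}^{\infty} \hat{f}(c+i\tau)\,e^{-i\tau t}\,\mathrm{d}\tau.
\]
Writing $t=\log x$, dividing by $e^{ct}=x^{c}$ and substituting $s=c+i\tau$ (so $\mathrm{d}s = i\,\mathrm{d}\tau$) yields
\[
f(x) = \frac{1}{2\pi}\int_{-\infty}^{\infty} \hat{f}(c+i\tau)\,x^{-c-i\tau}\,\mathrm{d}\tau = \frac{1}{2\pi i}\int_{(c)} \hat{f}(s)\,x^{-s}\,\mathrm{d}s,
\]
which is the desired formula. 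The contour does not depend on $c$ within $(a,b)$ because holomorphy of $\hat{f}$ on the strip, together with the decay of $g$ at $\pm\infty$ (equivalently, absolute convergence of the Mellin transform throughout $(a,b)$), justifies shifting the vertical line by a Cauchy/Morera argument.

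The main obstacle I anticipate is not the formal manipulation but the careful verification that $g$ satisfies the hypotheses of a pointwise Fourier inversion theorem; the convergence of the Mellin integral on an open strip (rather than merely at $c$) is precisely what is needed to pass from absolute convergence to sufficient decay of $g$, and the bounded variation hypothesis on $f$ is what guarantees that the Fourier integral represents $g(t)$ exactly (rather than an averaged value) at every point $t$.
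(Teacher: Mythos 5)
Your argument is correct in substance, and it actually supplies a proof where the paper gives none: the paper's ``proof'' of Proposition~\ref{MIF} is only the citation to \cite[p.~90]{IK}, and the reduction you carry out (substituting $x=e^t$ so that $\hat f(c+i\tau)$ becomes the Fourier transform of $g(t)=f(e^t)e^{ct}$, then invoking pointwise Fourier inversion for continuous $L^1$ functions of bounded variation) is exactly the standard route underlying that reference. One step deserves tightening. You assert that absolute convergence of the Mellin integral at $c\pm\varepsilon$ ``forces the decay of $g$ at $\pm\infty$ needed to make $g$ globally of bounded variation''; absolute integrability of $|f(x)|x^{\sigma-1}$ for nearby $\sigma$ does not by itself imply any pointwise decay of $g$, so this deduction is not justified as written. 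Fortunately it is also not needed: the Jordan--Dirichlet form of Fourier inversion requires only $g\in L^1(\mathbb{R})$ (which you correctly get from absolute convergence at $s=c$) together with bounded variation of $g$ on a neighbourhood of the point $t$ and continuity there, and it then yields $g(t)$ as the symmetric limit $\lim_{T\to\infty}\frac{1}{2\pi}\int_{-T}^{T}$. That symmetric limit is precisely the standard interpretation of $\int_{(c)}$ in the proposition, so you should drop the global-BV claim and instead state explicitly that the contour integral is understood as a principal value. The closing remark about shifting the line $\Re s=c$ within the strip is superfluous, since the argument proves the formula separately for each admissible $c$.
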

\begin{proof}
See \cite[Page 90]{IK}. 
\end{proof}

We will also need a mild bound for Dirichlet $L$-functions. The convex bound below will be sufficient for our purposes.

\begin{Proposition}[Convex bound for Dirichlet $L$-functions]\label{CB}
Let $q$ be a positive integer and $\chi$ be a non-principal Dirichlet character modulo $q$. Then if $\sigma\ge 0$ and $t\in \mathbb{R}$, we have
\begin{equation} \label{claimed2}
L(\sigma+it,\chi)
\ll_{\varepsilon}
(q(1+|t|))^{\max\{0, (1-\sigma)/2\}+\varepsilon}.
\end{equation}
\end{Proposition}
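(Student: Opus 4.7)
The plan is to deduce this from the classical Phragm\'en--Lindel\"of convexity principle applied to $L(s,\chi)$, following the standard derivation of the convexity bound. First I would reduce to the case of primitive $\chi$. If $\chi$ is non-principal modulo $q$ and induced by a primitive character $\chi_1$ of conductor $q_1\mid q$ (with $\chi_1$ non-trivial since $\chi$ is non-principal), then
$$
L(s,\chi)=L(s,\chi_1)\prod_{\substack{p\mid q\\ p\nmid q_1}}\bigl(1-\chi_1(p)p^{-s}\bigr),
$$
and for $\sigma\ge 0$ the finite Euler product is bounded by $\prod_{p\mid q}(1+p^{-\sigma})\le 2^{\omega(q)}\ll_{\varepsilon} q^{\varepsilon}$. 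Since $q_1\le q$, it suffices to establish the desired bound for primitive non-trivial $\chi_1$ of modulus $q_1$.

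For primitive non-trivial $\chi$ modulo $q$, I would next bound $L(s,\chi)$ on the two vertical lines framing an enlarged critical strip. On $\Re(s)=1+\varepsilon$, absolute convergence of the Dirichlet series gives $|L(s,\chi)|\le \zeta(1+\varepsilon)\ll_{\varepsilon} 1$. On the opposite line $\Re(s)=-\varepsilon$, I would apply the functional equation
$$
L(s,\chi)=W(\chi)\,(q/\pi)^{1/2-s}\,\frac{\Gamma((1-s+\mathfrak{a})/2)}{\Gamma((s+\mathfrak{a})/2)}\,L(1-s,\overline{\chi}),
$$
where $\mathfrak{a}\in\{0,1\}$ records the parity of $\chi$ and $|W(\chi)|=1$. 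On this line $\Re(1-s)=1+\varepsilon$, so the right-hand $L$-factor is again $O_{\varepsilon}(1)$, while Stirling's formula supplies the uniform estimate
$$
\left|\frac{\Gamma((1-s+\mathfrak{a})/2)}{\Gamma((s+\mathfrak{a})/2)}\right|\ll (1+|t|)^{1/2-\sigma}
$$
throughout any bounded vertical range of $\sigma$. Combining these contributions yields $|L(-\varepsilon+it,\chi)|\ll_{\varepsilon} q^{1/2+\varepsilon}(1+|t|)^{1/2+\varepsilon}$.

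With these two boundary estimates in hand, I would invoke Phragm\'en--Lindel\"of in the strip $-\varepsilon\le\Re(s)\le 1+\varepsilon$: the function $L(s,\chi)$ is entire (since $\chi$ is non-trivial) and of at most polynomial growth in $|t|$ throughout this strip, so linear interpolation between the exponents $1/2+\varepsilon$ at the left edge and $0$ at the right edge produces
$$
|L(\sigma+it,\chi)|\ll_{\varepsilon}(q(1+|t|))^{(1-\sigma)/2+\varepsilon}
$$
for $0\le\sigma\le 1$. For $\sigma\ge 1$, the target exponent in the statement is simply $\varepsilon$, which is immediate from absolute convergence when $\sigma>1$ and is otherwise contained in the same interpolation (reading off the value at $\sigma=1$).

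The main technical point is the Stirling estimate for the Gamma quotient, which must be uniform in $t$ including small $|t|$; this is standard but deserves care, typically handled by invoking the asymptotic Stirling expansion for $|t|\ge 1$ and a direct continuity argument for $|t|\le 1$. Beyond that, no serious obstacle appears, since the statement is the classical convexity bound rather than a sub-convex estimate.
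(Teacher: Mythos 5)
Your proposal is correct and follows essentially the same route as the paper: the reduction from imprimitive to primitive characters via the finite Euler product bounded by $q^{\varepsilon}$ is identical, and your Phragm\'en--Lindel\"of argument (boundary bounds on $\Re s=1+\varepsilon$ and $\Re s=-\varepsilon$ via the functional equation and Stirling, then interpolation) is precisely the content of the Iwaniec--Kowalski exercise that the paper cites for the primitive case. The only cosmetic difference is that the paper handles $\sigma\ge 1$ by partial summation while you read it off the interpolation and absolute convergence; both are fine.
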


\begin{proof} Let us first assume that $\chi$ is a primitive character of conductor $q>1$. Then for $0\le \sigma\le 1$, the claimed bound \eqref{claimed2} follows from \cite[Exercise 3 on page 100]{IK}, applied to Dirichlet $L$-functions. For $\sigma\ge 1$, this bound is standard and can be achieved via partial summation. If $\chi$ is non-primitive and induced by a primitive character $\chi_1$ of conductor $q_1>1$ dividing $q$, then 
$$
|L(s,\chi)|=\bigg|\prod\limits_{\substack{p|q\\ p\nmid q_1}} \left(1-p^{-s}\right)\bigg|\cdot |L(s,\chi_1)|\le \tau(q)|L(s,\chi_1)|\ll_{\varepsilon} q^{\varepsilon}|L(s,\chi_1)|
$$ 
if $\sigma=\mbox{Re}(s)\ge 0$, and hence, the claimed bound \eqref{claimed2} remains valid in this case. 
\end{proof}

A key ingredient in this paper is the following hybrid eighth moment bound for Dirichlet $L$-functions. 

\begin{Proposition}\label{eighth} Let $T,Q\ge 1$. Then
$$
\sum\limits_{q\le Q}\ \sum\limits_{\chi\bmod{q}}\ \int\limits_{-T}^T \left|L\left(\frac{1}{2}+it,\chi\right)\right|^8  {\rm d}t\ll
Q^2T^2\log^{16}(2QT).
$$
\end{Proposition}
\begin{proof} This follows from \cite[Proposition 3.2 with $c=0$]{CLMR}.
\end{proof}

Finally, we will need the following bound for the number of representations by a particular binary quadratic form.  

\begin{Proposition} \label{repr}
Let $\alpha\in \mathbb{N}$. Denote by $r(n,\alpha)$ the number of representations of $n\in \mathbb{N}$ in the form 
$$
n=x^2+\alpha y^2 \quad \mbox{with } x,y\in \mathbb{Z}. 
$$
Then 
$$
r(n,\alpha)\le 6\tau(n).
$$
\end{Proposition}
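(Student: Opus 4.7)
\emph{Plan.} The approach is to split the count into primitive representations (those with $\gcd(x,y)=1$) and the rest, then count the primitive ones via ideals in the imaginary quadratic field $K:=\mathbb{Q}(\sqrt{-\alpha})$.

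Writing $(x,y) = d(x',y')$ with $d = \gcd(x,y)$, one sees $d^2 \mid n$ and that $(x',y')$ is a primitive representation of $n/d^2$, yielding
\[
r(n,\alpha) = \sum_{\substack{d \in \mathbb{N}\\ d^2 \mid n}} r^*(n/d^2,\alpha),
\]
where $r^*(m,\alpha)$ counts primitive representations of $m$. Since $n/d^2 \mid n$ forces $\tau(n/d^2)\le \tau(n)$, and $\#\{d : d^2 \mid n\} \le \tau(n)$ by a direct multiplicative computation, it is enough to show
\[
r^*(m,\alpha) \le 6\,\tau(m)
\]
for every $m\in \mathbb{N}$.

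For this primitive bound I would work inside the maximal order $\mathcal{O}_K$ of $K$. A representation $(x,y)$ produces an element $z := x + y\sqrt{-\alpha} \in \mathbb{Z}[\sqrt{-\alpha}] \subseteq \mathcal{O}_K$ with $N_{K/\mathbb{Q}}(z) = m$, which generates a principal ideal $(z)\mathcal{O}_K$ of norm $m$. The total number of integral ideals of $\mathcal{O}_K$ of norm $m$ equals $\sum_{d\mid m}\chi_D(d)$, where $\chi_D$ is the Kronecker symbol of the discriminant $D$ of $K$, and is therefore at most $\tau(m)$. Each principal ideal has at most $|\mathcal{O}_K^\times| \le 6$ generators in $\mathcal{O}_K$, so a fortiori at most $6$ generators lying in $\mathbb{Z}[\sqrt{-\alpha}]$. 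This gives $r^*(m,\alpha) \le 6\tau(m)$, and combined with the first step yields $r(n,\alpha) \le 6\tau(n)^2$.

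The main point requiring care is that $\mathbb{Z}[\sqrt{-\alpha}]$ is generally a proper suborder of $\mathcal{O}_K$ (when $\alpha$ is non-squarefree, or when $-\alpha\equiv 1\bmod 4$, etc.), so one cannot directly invoke unique ideal factorization inside $\mathbb{Z}[\sqrt{-\alpha}]$ itself. The remedy used above is to pass immediately to the Dedekind domain $\mathcal{O}_K$, using only the inclusion $\mathbb{Z}[\sqrt{-\alpha}]\hookrightarrow \mathcal{O}_K$, so that the counting of ideals and generators is straightforward and upper-bounds the desired quantity. Alternatively, one could bypass this issue by invoking Gauss's classical formula for the number of primitive representations of $m$ summed over all classes of primitive forms of discriminant $-4\alpha$, which gives the same bound $r^*(m,\alpha) \le 6\tau(m)$.
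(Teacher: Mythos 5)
Your proof is correct. The core mechanism is the same as the paper's: pass to the imaginary quadratic field $K=\mathbb{Q}(\sqrt{-\alpha})$, send a representation $(x,y)$ to the principal ideal generated by $x+y\sqrt{-\alpha}$ in $\mathcal{O}_K$, and absorb the ambiguity in the generator into the unit count $|\mathcal{O}_K^{\times}|\le 6$. Where you differ is in the bookkeeping: the paper maps every representation of $n$ to an ideal \emph{divisor} of $(n)\mathcal{O}_K$ and bounds the number of such divisors by $\tau(n)^2$ via the explicit prime ideal factorization (at worst two prime ideals above each rational prime), whereas you first split off $d=\gcd(x,y)$ and then count ideals of \emph{exact} norm $m$ using the convolution formula $\sum_{d\mid m}\chi_D(d)\le\tau(m)$, paying a second factor of $\tau(n)$ for the number of $d$ with $d^2\mid n$. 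Both are valid; your route requires the slightly less elementary input that the ideal-counting function of $K$ is $1*\chi_D$, while the paper's needs only the splitting behaviour of primes. Worth noting: your reduction to primitive representations is never actually used — the ideal-norm argument applies verbatim to arbitrary representations and gives $r(n,\alpha)\le 6\tau(n)$ directly, which is stronger than the stated bound; the decomposition over $d$ only costs you an extra factor of $\tau(n)$ that you did not need to spend.
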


\begin{proof} Let $K:=\mathbb{Q}(\sqrt{-\alpha})$, $\mathcal{O}_K$ be the ring of algebraic integers in $K$ and $\omega_K$ be the number of units in $\mathcal{O}_K$. Then since $x^2+\alpha y^2=(x+y\sqrt{-\alpha})(x-y\sqrt{-\alpha})$, we have 
\begin{equation} \label{resti}
r(n,\alpha)\le \omega_K R(n,\alpha),
\end{equation}
where $R(n,\alpha)$ is the number of ways to write the ideal $(n)\subseteq \mathcal{O}_K$ in the form
\begin{equation} \label{idealrep}
(n)=\mathfrak{a}\overline{\mathfrak{a}}
\end{equation}
with $\mathfrak{a}$ being an integral ideal and $\overline{\mathfrak{a}}$ its conjugate in $\mathcal{O}_K$. 
Suppose that $n$ has the prime factorization
$$
n=\prod\limits_{p|n} p^{e_p}.
$$ 
Let $\mathcal{R}$, $\mathcal{I}$, $\mathcal{C}$ be the sets of primes $p$ dividing $n$ which are ramified, inert, split completely in $\mathcal{O}_K$, respectively. 
Then if $(n)$ has a representation as in \eqref{idealrep}, the exponents $e_p$ with $p\in \mathcal{I}$ are necessarily even, and $(n)$ has a prime ideal factorization of the form
$$
(n)=\prod\limits_{p\in \mathcal{R}} \mathfrak{p}^{2e_p} \prod\limits_{p\in \mathcal{I}} (p)^{e_p} \prod\limits_{p\in \mathcal{C}} \left(\mathfrak{p}^{e_p}\overline{\mathfrak{p}}^{e_p}\right),
$$
where $(p)=\mathfrak{p}^2$ is the prime ideal factorization of $(p)$ if $p\in \mathcal{R}$, and $(p)=\mathfrak{p}\overline{\mathfrak{p}}$ is the prime ideal factorization of $(p)$ if $p\in \mathcal{C}$. Hence, \eqref{idealrep} implies that $\mathfrak{a}$ is of the form
$$
\mathfrak{a}=\prod\limits_{p\in \mathcal{R}} \mathfrak{p}^{e_p} \prod\limits_{p\in \mathcal{I}} (p)^{e_p/2} \prod\limits_{p\in \mathcal{C}} \left(\mathfrak{p}^{f_p}\overline{\mathfrak{p}}^{e_p-f_p}\right),
$$
and $\overline{\mathfrak{a}}$ is of the form
$$
\overline{\mathfrak{a}}=\prod\limits_{p\in \mathcal{R}} \mathfrak{p}^{e_p} \prod\limits_{p\in \mathcal{I}} (p)^{e_p/2} \prod\limits_{p\in \mathcal{C}} \left(\mathfrak{p}^{e_p-f_p}\overline{\mathfrak{p}}^{f_p}\right),
$$
where $0\le f_p\le e_p$ if $p\in \mathcal{C}$. Thus, the number of choices $R(n,\alpha)$ of $\mathfrak{a}$ equals 
\begin{equation} \label{Reval}
R(n,\alpha)= \prod\limits_{p\in \mathcal{C}} (1+e_p)\le \prod\limits_{p|n} (1+e_p)= \tau(n).
\end{equation}
Now the result follows from \eqref{resti} and \eqref{Reval} since $\omega_K\le 6$. 
\end{proof} 

Tacitly, we will use the well-known bound (see \cite[Theorem 328]{HW})
$$
\frac{q}{\varphi(q)}\ll \log\log(10q) 
$$
at several places in this article. 

\section{Simplification of the problem} \label{ini}
The initial steps in our proof of Theorem \ref{mainthm2} are similar to those in \cite{BaCh1}. Set 
$$
S(q,\alpha_3):=\sum\limits_{\substack{(x_1,x_2,x_3)\in \mathbb{Z}^3\\ |x_1|,|x_2|\le N\\ (x_3,q)=1\\ 
x_1^2+\alpha_2x_2^2+\alpha_3x_3^2\equiv 0 \bmod{q}}} \Phi(x_3)
$$
and 
$$
M(q):=\frac{1}{\varphi(q)} 
\sum_{\substack{(x_1,x_2,x_3)\in \mathbb{Z}^3\\|x_1|,|x_2|\le N\\ \left(x_1^2+\alpha_2x_2^2,q\right)=1\\  (x_3,q)=1}} \Phi(x_3).
$$
Proceeding along the lines of \cite[section 4]{BaCh1}, we obtain the asymptotic formula
$$
M(q)=C_q\cdot \frac{(2N)^2}{q}\cdot \int\limits_{\mathbb{R}} \Phi(x){\rm d}x+O\left(\frac{N^2}{q^{1-\varepsilon}}\right)
$$
if gcd$(2\alpha_2,q)=1$.
The only difference to \cite[section 4]{BaCh1} is that we here have a smoothed sum in $x_3$, for which we easily show that 
$$
\sum\limits_{\substack{x_3\in \mathbb{Z}\\ (x_3,q)=1}} \Phi(x_3)= \frac{\varphi(q)}{q}\cdot \int\limits_{\mathbb{R}} \Phi(x){\rm d}x+O(q^{\varepsilon})
$$
using the properties of $\Phi$. Now the result of Theorem \ref{mainthm2} follows if we can establish that
\begin{equation*}
V:=\sum\limits_{\substack{Q<q\le 2Q\\ \text{gcd}(2\alpha_2,q)=1}} \sum\limits_{\substack{\alpha_3 \bmod{q}\\ \text{gcd}(\alpha_3,q)=1}} \left|S(q,\alpha_3)-M(q)\right|^2 \ll N^6Q^{-3\varepsilon}.
\end{equation*}

As in \cite[section 3]{BaCh1}, we use Dirichlet characters to detect the congruence condition  $x_1^2+\alpha_2x_2^2+\alpha_3x_3^2\equiv 0 \bmod{q}$, arriving at 
\begin{equation} \label{Vsplit}
V=\sum\limits_{\substack{Q<q\le 2Q\\ \text{gcd}(2\alpha_2,q)=1}} V_1(q)+\sum\limits_{\substack{Q<q\le 2Q\\ \text{gcd}(2\alpha_2,q)=1}} V_2(q)
\end{equation}
after a short calculation, where 
\begin{equation}\label{s1}
    V_1(q):=\frac{1}{\varphi(q)} \cdot  \sum\limits_{\substack{q_1|\text{rad}(q)\\ q_1>1}} \bigg|\sum_{\substack{|x_1|,|x_2|\leq N\\ (x_1^2+\alpha_2x_2^2,q_2)=1}} \left(\frac{x_1^2+\alpha_2x_2^2}{q_1}\right)\bigg|^2\cdot \bigg|\sum\limits_{\substack{x_3\in \mathbb{Z}\\ (x_3,q)=1}} \Phi(x_3)\bigg|^2 
\end{equation}
with $q_1q_2=\mbox{rad}(q)$, and 
\begin{equation}\label{s2}
    V_2(q):=\frac{1}{\varphi(q)} \sum\limits_{\substack{\chi\bmod{q}\\ \chi^2\neq \chi_0}}\bigg|\sum_{|x_1|,|x_2|\leq N} \chi\left(x_1^2+\alpha_2x_2^2\right) \bigg|^2 \cdot \bigg| \sum_{x_3\in \mathbb{Z}} \Phi(x_3)\overline{\chi}^2(x_3)\bigg|^2.
\end{equation}    
In \cite[sections 6 and 7]{BaCh1}, we demonstrated that (with 1 in place of the summand $\Phi(x_3)$ and $\varepsilon$ in place of $3\varepsilon$)
$$
V_1(q)\ll N^6q^{-1-3\varepsilon},
$$
provided that $N\ge q^{1/3+45\varepsilon}$. So for the proof of Theorem \ref{mainthm3}, it is enough to establish that 
\begin{equation} \label{newgoal}
\sum\limits_{\substack{Q<q\le 2Q\\ \text{gcd}(2\alpha_2,q)=1}} V_2(q)\ll N^6Q^{-3\varepsilon}.
\end{equation}

\section{Elementary estimates}
Using H\"older's inequality, we have 
\begin{equation} \label{Hold}
\sum\limits_{\substack{Q<q\le 2Q\\ \text{gcd}(2\alpha_2,q)=1}} V_2(q)\le E_1^{1/2}E_2^{1/4}F^{1/4},
\end{equation}
where 
\begin{equation} \label{Ekdef}
E_k:=\sum_{\substack{Q< q\leq 2Q\\ \text{gcd}(2\alpha_2,q)=1}}\frac{1}{\varphi(q)}\sum\limits_{\chi \bmod{q}}\bigg|\sum_{|x_1|,|x_2|\leq N}\chi(x_1^2+\alpha_2x_2^2)\bigg|^{2k} \quad \mbox{for } k=1,2
\end{equation}
and 
$$
F:=\sum_{\substack{Q< q\leq 2Q\\ \text{gcd}(2\alpha_2,q)=1}} \frac{1}{\varphi(q)}\sum\limits_{\substack{\chi \bmod{q}\\ \chi^2\neq \chi_0}}\bigg|\sum_{x_3\in \mathbb{Z}}\Phi(x_3)\chi^2(x_3)\bigg|^8,
$$
where we have replaced $\overline{\chi}$ by $\chi$. 
The following two lemmas provide bounds for $E_1$ and $E_2$. 

\begin{Lemma} \label{E1} For $Q,N\ge 1$, we have
$$
E_1\ll_{\varepsilon} (N^2+Q)N^{2+\varepsilon}.
$$
\end{Lemma}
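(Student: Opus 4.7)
The plan is to open the square by the orthogonality of Dirichlet characters modulo $q$. Since
$$
\frac{1}{\varphi(q)} \sum_{\chi \bmod{q}} \chi(a)\overline{\chi}(b) = \mathbf{1}_{\,a \equiv b \bmod q,\ \gcd(ab,q)=1},
$$
expanding the squared modulus and dropping the coprimality condition for an upper bound gives
$$
E_1 \;\le\; \sum_{Q<q\le 2Q} \#\bigl\{(x_1,x_2,y_1,y_2) \in \mathbb{Z}^4 \cap [-N,N]^4 : q \mid D\bigr\},
$$
where $D := (x_1^2+\alpha_2 x_2^2) - (y_1^2+\alpha_2 y_2^2)$. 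I would then interchange the order of summation so that for each quadruple we count the admissible $q \in (Q,2Q]$ dividing $D$, and split the result according to whether $D=0$ or not.

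In the diagonal case $D=0$, every $q \in (Q,2Q]$ is admissible, contributing a factor of $O(Q)$. The remaining equation $x_1^2+\alpha_2 x_2^2 = y_1^2+\alpha_2 y_2^2$ is controlled by Proposition \ref{repr}: fixing $(y_1,y_2)\ne (0,0)$, the number of $(x_1,x_2) \in \mathbb{Z}^2$ realising the value $n := y_1^2+\alpha_2 y_2^2 > 0$ is at most $r(n,\alpha_2) \le 6 \tau(n)^2 \ll n^\varepsilon \ll N^\varepsilon$, while the trivial case $(y_1,y_2)=(0,0)$ forces $(x_1,x_2)=(0,0)$ and contributes $1$. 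Summing over $(y_1,y_2) \in [-N,N]^2 \cap \mathbb{Z}^2$ bounds the number of diagonal quadruples by $O(N^{2+\varepsilon})$, so the total diagonal contribution is $O(QN^{2+\varepsilon})$.

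In the off-diagonal case $D \ne 0$ we have $|D| \le 2(1+\alpha_2)N^2$, and the standard divisor bound yields at most $\tau(|D|) \ll N^\varepsilon$ values of $q \in (Q,2Q]$ dividing $D$. The number of off-diagonal quadruples is trivially at most $O(N^4)$, so this case contributes $O(N^{4+\varepsilon})$ in total.

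Summing the two cases produces the claimed estimate $E_1 \ll (N^2+Q)\,N^{2+\varepsilon}$. No genuine obstacle is expected in this lemma; the only nontrivial ingredient beyond character orthogonality and the divisor bound is Proposition \ref{repr}, which is precisely what prevents the diagonal $D=0$ term from being worse than $N^{2+\varepsilon}$.
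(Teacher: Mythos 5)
Your argument is correct and is essentially the argument behind the paper's one-line citation to \cite[equations (19) and (20)]{BaCh1}: orthogonality of characters, a diagonal/off-diagonal split, Proposition \ref{repr} plus the divisor bound for the diagonal term $O(QN^{2+\varepsilon})$, and the divisor bound on $D$ for the off-diagonal term $O(N^{4+\varepsilon})$ --- exactly the second-moment analogue of the paper's own proof of Lemma \ref{E2}. The only cosmetic difference is that your constants carry a harmless $\alpha_2^{\varepsilon}$ (as in Lemma \ref{E2}), which is consistent with how the theorems are stated.
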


\begin{proof}
This follows from \cite[equations (19) and (20)]{BaCh1}.  
\end{proof}

\begin{Lemma} \label{E2} For $Q,N\ge 1$, we have
$$
E_2\ll_{\varepsilon} (N^4+Q)N^{4}(\alpha_2N)^{\varepsilon}.
$$
\end{Lemma}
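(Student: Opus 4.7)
The plan is to expand the fourth power in the definition of $E_2$ and apply the orthogonality relations for Dirichlet characters modulo $q$ to convert the character-sum problem into a counting problem for an additive congruence. Writing $f(a,b):=a^2+\alpha_2 b^2$, expansion gives
$$
\bigg|\sum_{|x_1|,|x_2|\le N}\chi(f(x_1,x_2))\bigg|^4=\sum_{(x,y,z,w)}\chi\bigl(f(x)f(y)\bigr)\overline{\chi}\bigl(f(z)f(w)\bigr),
$$
where the outer sum ranges over $8$-tuples with each coordinate in $[-N,N]$. Summing over $\chi\bmod q$ and using orthogonality (noting that $\chi(0)=0$, so the coprimality conditions are automatic), the inner sum collapses to $\varphi(q)$ times the number of $8$-tuples satisfying $f(x)f(y)\equiv f(z)f(w)\pmod q$. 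Thus the factor $1/\varphi(q)$ cancels, and after swapping the order of summation we obtain
$$
E_2\le \sum_{(x,y,z,w)}\ \#\bigl\{Q<q\le 2Q:\ q\mid f(x)f(y)-f(z)f(w)\bigr\}.
$$

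Next I split according to whether $f(x)f(y)=f(z)f(w)$ or not. In the off-diagonal case, the number of admissible moduli $q$ is at most $\tau\bigl(|f(x)f(y)-f(z)f(w)|\bigr)\ll_{\varepsilon}(N\alpha_2)^{\varepsilon}$, and since there are $O(N^8)$ tuples in total, this contributes $\ll N^{8+\varepsilon}\alpha_2^{\varepsilon}$, which is well within the target.

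The main task is the diagonal contribution, for which there are at most $Q$ valid moduli per tuple, so I need to bound the number $T$ of $8$-tuples with $f(x)f(y)=f(z)f(w)$. Letting $R(A):=\#\{(x_1,x_2,y_1,y_2):f(x_1,x_2)f(y_1,y_2)=A\}=\sum_{d\mid A}r(d,\alpha_2)r(A/d,\alpha_2)$, Proposition \ref{repr} yields $r(n,\alpha_2)\le 6\tau(n)^2$, hence $R(A)\ll\tau(A)^5\ll_{\varepsilon}(N^2\alpha_2)^{\varepsilon}$. Since $\sum_{A}R(A)\ll N^4$ (this is just the total count of pairs of pairs), I get
$$
T=\sum_{A}R(A)^2\le \Bigl(\max_{A}R(A)\Bigr)\sum_{A}R(A)\ll_{\varepsilon}N^{4}(N\alpha_2)^{\varepsilon}.
$$
Multiplying by $Q$ and combining with the off-diagonal bound gives $E_2\ll(Q+N^4)N^{4}(N\alpha_2)^{\varepsilon}$, as claimed.

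The principal point of care is the diagonal estimate: the naive bound $\sum_{A}R(A)^2$ could potentially be as large as $N^{4+\varepsilon}\cdot\max R$ times another factor, but Proposition \ref{repr} is exactly strong enough to ensure $\max R(A)$ is subpolynomial, so the uniform pointwise bound times the $\ell^1$ bound gives the required estimate. Everything else—orthogonality, the divisor bound, the swap of sums—is routine, and the argument parallels the standard fourth-moment technique for character sums.
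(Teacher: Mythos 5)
Your proof is correct and follows essentially the same route as the paper: orthogonality of Dirichlet characters to reduce to counting $8$-tuples with $f(x)f(y)\equiv f(z)f(w)\pmod q$, the divisor bound for the off-diagonal terms, and Proposition \ref{repr} for the diagonal. Your bound $\sum_A R(A)^2\le(\max_A R(A))\sum_A R(A)$ is just a repackaging of the paper's estimate for its diagonal sum $G_0$, so the two arguments coincide.
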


\begin{proof}
Using the orthogonality relations for Dirichlet characters, we obtain
\begin{equation*}
E_2
=\sum_{\substack{Q<q\leq 2Q\\ \text{gcd}(2\alpha_2,q)=1}}\sum\limits_{\substack{|x_1|,|x_2|,|y_1|,|y_2|\leq N\\|u_1|,|u_2|,|v_1|,|v_2|\leq N\\ \text{gcd}((x_1^2+\alpha_2x_2^2)(y_1^2+\alpha_2 y_2^2)(u_1^2+\alpha_2u_2^2)(v_1^2+\alpha_2 v_2^2),q)=1\\ (x_1^2+\alpha_2x_2^2)(y_1^2+\alpha_2 y_2^2)\equiv (u_1^2+\alpha_2u_2^2)(v_1^2+\alpha_2 v_2^2)\bmod{q}}}1. 
\end{equation*}
Splitting this into two sums according to whether $$(x_1^2+\alpha_2x_2^2)(y_1^2+\alpha_2 y_2^2)= (u_1^2+\alpha_2u_2^2)(v_1^2+\alpha_2 v_2^2)$$ or not, and removing some summation conditions, we have 
$$
E_2\le G_0+G_1,
$$
where
$$
G_0:=\sum_{Q<q\leq 2Q}\sum\limits_{\substack{|x_1|,|x_2|,|y_1|,|y_2|\leq N\\|u_1|,|u_2|,|v_1|,|v_2|\leq N\\(x_1^2+\alpha_2x_2^2)(y_1^2+\alpha_2 y_2^2)= (u_1^2+\alpha_2u_2^2)(v_1^2+\alpha_2 v_2^2)}}1 
$$
and 
\begin{equation} \label{G1def}
G_1:=\sum\limits_{\substack{|x_1|,|x_2|,|y_1|,|y_2|\leq N\\|u_1|,|u_2|,|v_1|,|v_2|\leq N\\(x_1^2+\alpha_2x_2^2)(y_1^2+\alpha_2 y_2^2)\neq (u_1^2+\alpha_2u_2^2)(v_1^2+\alpha_2 v_2^2)}}\sum\limits_{\substack{q\in \mathbb{N}\\ q | \left((x_1^2+\alpha_2x_2^2)(y_1^2+\alpha_2 y_2^2)-(u_1^2+\alpha_2u_2^2)(v_1^2+\alpha_2 v_2^2)\right)}}1,
\end{equation}
where we have interchanged summations and then dropped the condition $Q<q\le 2Q$ in $G_1$. 
Recall that $\alpha_2$ is assumed to be a positive integer. Applying the divisor bound $\tau(n)\ll_\varepsilon n^{\varepsilon}$ gives
$$
G_1\ll_{\varepsilon} N^8(\alpha_2N)^{\varepsilon}.
$$  
We may bound $G_0$ by
$$
G_0\le Q \sum\limits_{|x_1|,|x_2|,|y_1|,|y_2|\le N} \sum\limits_{\substack{d,e\in \mathbb{N}\\ (x_1^2+\alpha_2x_2^2)(y_1^2+\alpha_2 y_2^2)=de}} r(d,\alpha_2)r(e,\alpha_2),
$$ 
where $r(n,\alpha)$ is the number of representations of $n\in \mathbb{N}$ in the form $n=u^2+\alpha_2v^2$ with $u,v\in \mathbb{Z}^2$. Now using Proposition \ref{repr} in conjunction with the divisor bound $\tau(n)\ll_{\varepsilon} n^{\varepsilon}$ gives
$$
G_0\ll_{\varepsilon} QN^4(\alpha_2N)^{\varepsilon}.
$$
Combining everything above yields the desired estimate for $E_2$. 
\end{proof}

Noting that $Q^{1/4}\le N\le Q^{1/2}$ in Theorem \ref{mainthm2}, the bounds in \eqref{Hold} and Lemmas \ref{E1} and \ref{E2} imply that
\begin{equation} \label{Holdnew}
\sum\limits_{Q<q\le 2Q} V_2(q)\ll_{\varepsilon} Q^{1/2}N^3F^{1/4}(\alpha_2N)^{\varepsilon}.
\end{equation} 
Moreover, since for every Dirichlet character $\chi_1$ modulo $q$, the number of characters $\chi$ modulo $q$ satisfying $\chi^2=\chi_1$ is $\ll \tau(q)\ll_{\varepsilon} q^{\varepsilon}$, we have the bound 
\begin{equation} \label{Fbound}
F\ll_{\varepsilon} Q^{\varepsilon} \sum_{Q< q\leq 2Q}\frac{1}{\varphi(q)}\sum\limits_{\substack{\chi \bmod{q}\\ \chi\neq \chi_0}}\bigg|\sum_{n\in \mathbb{Z}}\Phi(n)\chi(n)\bigg|^8,
\end{equation}
where we have dropped the summation condition $\text{gcd}(2\alpha_2,q)=1$ for simplicity. 
\section{Estimation of the main sum}
To prove Theorem \ref{mainthm2}, it remains to estimate the eighth moment of a short smoothed character sums on the right-hand side of \eqref{Fbound}. For $n>0$, we write the smooth weighth function $\Phi(n)$ using the Mellin inversion formula, Proposition \ref{MIF}, in the form 
\begin{equation*}
\Phi(n)=\frac{1}{2\pi i}\int_{c-i\infty}^{c+i\infty}\hat{\Phi}(s)n^{-s}ds,
\end{equation*}
where $c>1$ and 
$$
\hat{\Phi}(s):= \int_{0}^{\infty} \Phi(x) x^{s-1}{\rm d}x
$$
is the Mellin transform of $\Phi$. 
Summing over $n\in \mathbb{N}$, we conclude that
\begin{equation*}
\sum_{n=1}^{\infty}\Phi(n)\chi(n)=\frac{1}{2\pi i} \int_{c-i\infty}^{c+i\infty}\hat{\Phi}(s)L(s,\chi)ds.  
\end{equation*}
The integrand is analytic in the half plane $\mbox{Re}(s)>0$, and hence, we may use Cauchy's integral theorem to write
$$
\int_{c-i\infty}^{c+i\infty} \hat{\Phi}(s)L(s,\chi)ds=(I_1+I_2+I_3+I_4+I_5)\hat{\Phi}(s)L(s,\chi)ds,
$$ 
where 
$$
I_1:=\int_{c-i\infty}^{c-iT}, \quad I_2:=\int_{c-IT}^{1/2-iT}, \quad I_3:=\int_{1/2-iT}^{1/2+iT}, \quad I_4:=\int_{1/2+iT}^{c+iT}, \quad I_5:=\int_{c+iT}^{c+i\infty}
$$
for any $T>0$. Performing integration by parts for $j$ times and recalling the properties of $\Phi$ in Theorem \ref{mainthm2} gives the bound 
\begin{equation} \label{mellin1}
\hat\Phi(\sigma+it)\ll_j |t|^{-j} N^{\sigma-1+j}\int\limits_{\mathbb{R}} |\Phi^{(j)}(x)|dx\ll \Delta N^{\sigma-1}\cdot \left(\frac{N}{\Delta|t|}\right)^{j}, 
\end{equation}
which holds uniformly for $\sigma\ge 1/2$. So taking $j=\lceil 2028/\varepsilon\rceil$ and recalling that $N\le Q^{1/2}$, we get 
\begin{equation} \label{neglig}
\hat{\Phi}(\sigma+it)\ll Q^{-2027} \quad \mbox{for } \frac{1}{2}\le \sigma\le c \mbox{ and } |t|\ge T
\end{equation}
if 
\begin{equation} \label{T}
c:=1+\frac{1}{\log(2N)} \quad \mbox{and} \quad T:=\frac{Q^{\varepsilon}N}{\Delta}.
\end{equation}
We will keep these choices of $c$ and $T$ in the following. Using \eqref{neglig} together with Proposition \ref{CB}, it follows that 
$$
I_2,I_4\ll Q^{-2026}.
$$
(Finally, we are down to this year.) Similarly, using \eqref{mellin1}, we calculate that
$$
I_1,I_5\ll Q^{-2026}.
$$
We also have the trivial bound
\begin{equation} \label{mellin2}
\hat\Phi(\sigma+it)\ll N^{\sigma}, 
\end{equation}
and hence, 
$$
I_3\ll N^{1/2}\int\limits_{-T}^T \left|L\left(\frac{1}{2}+it,\chi\right)\right|{\rm d}t.
$$
Combining the above, we arrive at the bound
$$
\sum\limits_{n=1}^{\infty} \Phi(n)\chi(n) \ll N^{1/2}\int\limits_{-T}^T \left|L\left(\frac{1}{2}+it,\chi\right)\right|{\rm d}t+Q^{-2025}. 
$$
The same bound can be derived along similar lines for the sum $\sum\limits_{n<0}\Phi(n)\chi(n)$, and hence, using H\"older's inequality for integrals and \eqref{Fbound}, we deduce that
$$
F\ll Q^{\varepsilon} T^7 N^4\sum_{Q< q\leq 2Q}\frac{1}{\varphi(q)} \sum\limits_{\chi \bmod{q}}\int\limits_{-T}^{T}  \left|L\left(\frac{1}{2}+it,\chi\right)\right|^8 {\rm d}t+Q^{-2025}.
$$
Finally, applying Proposition \ref{eighth} to the right-hand side above, and recalling the choice of $T$ in \eqref{T} and $N\le Q^{1/2}$, we obtain 
\begin{equation} \label{finalFbound}
F\ll (QT)^{2\varepsilon}T^9N^4Q\ll \left(\frac{N}{\Delta}\right)^9N^4 Q^{1+12\varepsilon}
\end{equation}
if $\varepsilon$ is small enough.

\section{Proofs of Theorems \ref{mainthm2} and \ref{mainthm3}} \label{final}
Now we are ready to finalize the proof of Theorem \ref{mainthm2}. Combining \eqref{Holdnew} and \eqref{finalFbound}, and using $N\le Q^{1/2}$, we have 
\begin{equation} 
\sum\limits_{Q<q\le 2Q} V_2(q)\ll_{\varepsilon} Q^{3/4}N^4\cdot \left(\frac{N}{\Delta}\right)^{9/4}(\alpha_2Q)^{4\varepsilon}.
\end{equation} 
The desired bound \eqref{newgoal} follows, provided that 
$$
\Delta\ge Q^{1/3}N^{1/9}(\alpha_2 Q)^{4\varepsilon}.
$$
This completes the proof of Theorem \ref{mainthm2}. 

We keep the proof of Theorem \ref{mainthm3} brief since many arguments repeat. First, we approximate the sharp cutoff sum in \eqref{newasym} by a smoothed sum as in \eqref{asympform'}, where we assume that
\begin{equation} \label{newDeltaN}
\Delta\le NQ^{-2\varepsilon}.
\end{equation}  
Accordingly, the expected main term changes by a small quantity of size $$\ll N^3\varphi(q)^{-1}Q^{-2\varepsilon}.$$
Here we note that $\Phi(x)$ agrees with the characteristic function of the interval $[-N,N]$ except for $N-\Delta\le |x|\le N+\Delta$.   
Now the claimed asymptotic \eqref{asympform'} follows for all $(q,\alpha_3)\in \mathcal{B}(Q,\alpha_2)$ with at most $O\left(Q^{2-\varepsilon}\right)$ exceptions if $\Delta$ and $N$ satisfy the conditions \eqref{DeltaN}, \eqref{newDeltaN} and 
\begin{equation} \label{anothergoal}
\sum\limits_{\substack{Q<q\le 2Q\\ \text{gcd}(2\alpha_2,q)=1}} \bigg| \sum\limits_{\substack{|x_1|,|x_2|\le N\\ N-\Delta\le x_3\le N+\Delta\\ (x_3,q)=1\\ 
x_1^2+\alpha_2x_2^2+\alpha_3x_3^2\equiv 0 \bmod{q}}} 1\bigg|^2 \ll N^6Q^{-3\varepsilon}.
\end{equation}
Following similar arguments as in section \ref{ini}, \eqref{anothergoal} holds if  
\begin{equation} \label{newgoal'}
\sum\limits_{\substack{Q<q\le 2Q\\ \text{gcd}(2\alpha_2,q)=1}} V_2'(q)\ll N^6Q^{-3\varepsilon},
\end{equation}
where 
$$
    V_2'(q):=\frac{1}{\varphi(q)} \sum\limits_{\substack{\chi\bmod{q}\\ \chi^2\neq \chi_0}}\bigg|\sum_{|x_1|,|x_2|\leq N} \chi\left(x_1^2+\alpha_2x_2^2\right) \bigg|^2 \cdot \bigg| \sum_{N-\Delta\le |x_3|\leq N+\Delta} \overline{\chi}^2(x_3)\bigg|^2.    
$$
Now we simply estimate the short character sum over $x_3$ using Proposition \ref{Burgess} with $r=3$, obtaining
$$
\sum\limits_{\substack{Q<q\le 2Q\\ \text{gcd}(2\alpha_2,q)=1}} V_2'(q)\ll \Delta^{4/3} Q^{2/9+\varepsilon}E_1,
$$  
where $E_1$ is defined as in $\eqref{Ekdef}$. Using the condition $N\le Q^{1/2}$ and Lemma \ref{E1}, it follows that
$$
\sum\limits_{\substack{Q<q\le 2Q\\ \text{gcd}(2\alpha_2,q)=1}} V_2'(q)\ll \Delta^{4/3} Q^{11/9+2\varepsilon}N^2.
$$ 
This is less than or equal to $N^6Q^{-3\varepsilon}$ if  
$$
\Delta\le  Q^{-11/12-4\varepsilon}N^3.
$$
We fix 
$$
\Delta:=\min\left\{Q^{-11/12-4\varepsilon}N^3,Q^{-2\varepsilon}N\right\}. 
$$
Then \eqref{DeltaN} and \eqref{newDeltaN} hold if 
$$
Q^{45/104}(\alpha_2 Q)^{3\varepsilon}\le N\le Q^{1/2}.
$$
This completes the proof of Theorem \ref{mainthm3}. \\ \\
{\bf Remark 2:} If we take $r=4$ in place of $r=3$ in our application of Proposition \ref{Burgess} above, then we obtain the slightly better exponent $159/368$ in place of $45/104$. However, in this case, the said Proposition \ref{Burgess} is applicable only if $q$ is cubefree. Taking larger values $r>4$ in Proposition \ref{Burgess} does not improve the result.

\end{document}